\documentclass{amsart}
\usepackage[english]{babel}
\usepackage{amssymb}
\usepackage[all]{xy}

\newtheorem{theorem}{Theorem}
\newtheorem{prop}[theorem]{Proposition}
\newtheorem{coro}[theorem]{Corollary}

\newtheorem{definition}[theorem]{Definition}
\newtheorem{lemma}[theorem]{Lemma}

\newcommand{\TC}{\mathord{\mathrm{TC}}}

\newcommand{\wTC}{\mathord{\mathrm{wTC}}}

\newcommand{\cat}{\mathord{\mathrm{cat}}}
\newcommand{\wcat}{\mathord{\mathrm{wcat}}}

\newcommand{\pr}{\mathord{\mathrm{pr}}}

\newcommand{\ev}{\mathord{\mathrm{ev}}}

\newcommand{\pp}{\ltimes}
\newcommand{\nil}{\mathord{\mathrm{nil}}}
\newcommand{\Hom}{\mathord{\mathrm{Hom}}}

\title{Spaces with high topological complexity}
\author[Aleksandra Franc]{Aleksandra Franc}
\address{Faculty of Computer and Information Science, University of Ljubljana\newline\indent Tr\v{z}a\v{s}ka 25\newline\indent 1000 Ljubljana, Slovenia}
\email{\rm{aleksandra.franc@fri.uni-lj.si}}
\author[Petar Pave\v{s}i\'c]{Petar Pave\v{s}i\'c}
\address{Faculty of Mathematics and Physics, University of Ljubljana\newline\indent Jadranska 21\newline\indent 1000 Ljubljana, Slovenia}
\email{\rm{petar.pavesic@fmf.uni-lj.si}}
\thanks{This work was supported by the Slovenian Research Agency grant P1-0292-0101; the first author was fully supported under contract no. 1000-07-310002.}
\keywords{topological complexity, fibrewise Lusternik-Schnirelmann category}
\subjclass[2010]{55R70, 55M30}
\begin{document}
\begin{abstract} By a formula of Farber \cite[Theorem 5.2]{Farber:Inst} the topological complexity 
$\TC(X)$ of a $(p-1)$-connected, $m$-dimensional CW-complex $X$ is bounded above by $(2m+1)/p+1$. 
There are also various lower estimates for $\TC(X)$ such as the nilpotency of the ring 
$H^*(X\times X,\Delta(X))$, and the weak and stable topological compexity $\wTC(X)$ and 
$\sigma\TC(X)$ (see \cite{FP}). In general the difference between these upper and lower bounds can 
be arbitrarily large. In this paper we investigate spaces whose topological complexity is close to 
the maximal value given by Farber's formula and show that in these cases the gap between the lower 
and upper bounds is narrow and that $\TC(X)$ often coincides with the lower bounds.
\end{abstract}
\maketitle

%============================================================================================

\section{Introduction}

Topological complexity was introduced by Farber in \cite{Farber:TC} as a measure of the discontinuity 
of robot motion planning algorithms. A \emph{motion planning algorithm} in a space $X$ is a rule that 
takes as input a pair of points $x,y\in X$ and returns a path in $X$ starting at $x$ and ending at $y$. 
One is interested to find the minimal number of rules that are continuously dependent on the input, and 
that are sufficient to connect any two points of $X$. The formal definition is as follows. Let $X^I$ be 
the space of paths in $X$ (endowed with the compact-open topology) and let $p\colon X^I\to X\times X$ be 
the fibration given by $p(\alpha)=(\alpha(0),\alpha(1))$. A continuous choice of paths between given 
end-points corresponds to a continuous section of $p$. However, a global section exists if and only if 
$X$ is contractible (cf. \cite{Farber:TC}), so for a general space we may ask how many local sections are  
needed to cover all possible pairs of end-points.

\begin{definition}\label{FarberDef}
{\em Topological complexity} $\TC(X)$ of a space $X$ is the least integer $n$ for which there exist an 
open cover $\{U_1, U_2,\ldots, U_n\}$ of $X\times X$ and sections $s_i\colon U_i\rightarrow X^I$ of the 
fibration $p\colon X^I\to X\times X$.
\end{definition}

Observe that this definition is just a special case of the \emph{Schwarz genus} \cite{Schwarz} or the 
\emph{sectional category} of James \cite{JamesCup}. 
In an attempt to extend certain standard techniques of homotopy theory, in particular of the Lusternik-Schnirelmann
(LS-) category to the topological complexity, Iwase and Sakai \cite{IS} have introduced the following concept:

\begin{definition}\label{Mdef}
{\em Monoidal topological complexity} $\TC^M(X)$ of a space $X$ is the least integer $n$ for which there 
exist an open cover $\{U_1, U_2,\ldots, U_n\}$ of $X\times X$ such that $\Delta(X)\subset U_i$, and 
sections $s_i\colon U_i\rightarrow X^I$ of the fibration $p\colon X^I\to X\times X$, such that 
$s_i(x,x)=c_x$, the constant path in $x$.
\end{definition}

In other words, for the monoidal complexity we consider only the motion planning algorithms that satisfy 
the natural requirement that the robot motion should be constant whenever 
the starting and ending points coincide. In a sense, the relation between the ordinary and the monoidal 
topological complexity is analogous to the relation between the standard definition of the LS-category and 
the alternative definition introduced by G.W.Whitehead (cf. \cite[Section 1.6]{CLOT}). It is well-known that 
for locally nice spaces 
Whitehead's definition of the LS-category coincides with the original one. As for the topological complexity 
Iwase and Sakai \cite{IS} claimed that 
$\TC^M(X)=\TC(X)$ for every locally finite simplicial complex $X$ but in \cite{IS - errata} they retracted 
the claim and proved that for $X$ as above the difference between the two invariants is at most one, i.e. 
$\TC(X)\le \TC^M(X)\le \TC(X)+1$. They also proved that the two versions of topological complexity
have the same value when $X$ admits a cover with some
special properties (see \cite{IS - errata}). Up to now the most general result regarding the equality 
between $\TC$ and $\TC^M$ was proposed by Dranishnikov \cite{Dra} who used obstruction theory to show 
that they coincide when the 
topological complexity of $X$ exceeds certain estimate depending on the dimension and
the connectivity of $X$.   

The importance of the monoidal topological complexity comes both from the previously mentioned  practical 
considerations  and from its strong relation with the LS-category. In fact, Iwase and 
Sakai \cite{IS} found a useful characterization 
of the monoidal topological complexity as the fibrewise pointed LS-category (see Section 2 for details), 
which makes the above 
analogy even clearer. In \cite{FP} we exploited this new approach and introduced several lower bounds for 
$\TC^M(X)$ that refine previously known estimates. Nevertheless, these bounds need not be precise, and in fact 
one can always construct spaces for which the difference between the estimate and the
actual value of $\TC^M(X)$ is arbitrarily large. 
In this paper we investigate an interesting phenomenon that was already observed in the case of LS-category: when the 
topological complexity of $X$ is close to a certain upper bound that can be computed from the dimension 
and connectivity of $X$, then the lower bounds are also good approximations. Crucial here is the theorem of 
Dranishnikov (see \cite{Dra} and Section 2 for details) which implies that $\TC$ and $\TC^M$ are equal when $\TC$ 
is close to this upper bound.

The paper is organized as follows. In the next section we describe a diagram of fibrewise pointed spaces that 
relates the two principal approaches to the monoidal topological complexity, and recall the definitions of the main lower 
bounds for $\TC^M(X)$, namely the nilpotency of the ring $H^*(X\times X,\Delta(X))$, the weak topological 
complexity $\wTC(X)$ and the stable topological complexity $\sigma\TC(X)$. Each of the remaining sections is dedicated 
to one of the estimates: the dimension upper bounds, the weak and the stable lower bounds. 

Unless otherwise stated, the spaces under consideration are assumed to have the homotopy type of a finite 
CW-complex. We do not distinguish notationally between a map and its homotopy class. Standard notation for 
maps is $1$ for the identity map, $\Delta_n\colon X\rightarrow X^n$ for the diagonal map 
$x\mapsto (x,\ldots,x)$, $\pr_i$ for the projection from a product to the $i$-th factor and $\ev_{0,1}$ for 
the evaluation of a path in $X^I$ to the end-points. When considering the LS-category of a space we always use 
the non-normalized version (so that the category of a contractible space is equal to 1).

%========================================================================

\section{Preliminaries}
\label{prelim}

Recall that a {\em fibrewise pointed space} over a {\em base} $B$ is a topological space $E$, together 
with a {\em projection} $p\colon E\rightarrow B$ and a {\em section} $s\colon B\rightarrow E$. Fibrewise 
pointed spaces over a base $B$ form a category and the notions of fibrewise pointed maps and fibrewise 
pointed homotopies are defined in an obvious way. We refer the reader to \cite{James} and \cite{JamesMorris} 
for more details on fibrewise constructions. In \cite{IS} Iwase and Sakai considered the product $X\times X$ 
as a fibrewise pointed space over $X$ by taking the projection to the first component and the diagonal 
section $\Delta$ as in the diagram $X\stackrel{\Delta}{\rightarrow} X\times X\stackrel{\pr_1}{\rightarrow} X$.
Their description of the topological complexity is based on the following result.

\begin{theorem}[Iwase-Sakai~\cite{IS}]\label{IwaseDef}
The topological complexity $\TC(X)$ of $X$ is equal to the least integer $n$ for which there exists an open cover 
$\{U_1, U_2,\ldots, U_n\}$ of $X\times X$ such that each $U_i$ is compressible to the diagonal via a fibrewise homotopy.

The monoidal topological complexity $\TC^M(X)$ of $X$ is equal to the least integer $n$ for which 
there exists an open cover $\{U_1, U_2,\ldots, U_n\}$ of $X\times X$ such that each $U_i$ contains the 
diagonal $\Delta(X)$ and is compressible to the diagonal via a fibrewise pointed homotopy.
\end{theorem}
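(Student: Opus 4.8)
The plan is to exhibit, for each open set $U\subseteq X\times X$, an explicit bijective correspondence between sections $s\colon U\to X^I$ of the path fibration $p$ and fibrewise homotopies compressing the inclusion $U\hookrightarrow X\times X$ to the diagonal, and then to check that this correspondence respects the pointed (monoidal) refinement. Once this local equivalence is in place, the two stated equalities follow by comparing the defining covers: a cover by sets admitting local sections, as in Definitions \ref{FarberDef} and \ref{Mdef}, corresponds exactly to a cover by fibrewise (respectively fibrewise pointed) compressible sets, so the optimal covers on the two sides have the same cardinality.

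First I would unwind the fibrewise structure. Regarding $X\times X$ as a space over $X$ via $\pr_1$ with section $\Delta$, a fibrewise map $U\to X\times X$ is one fixing the first coordinate, hence of the form $(x,y)\mapsto(x,g(x,y))$. Thus \emph{compressible to the diagonal via a fibrewise homotopy} means precisely that the inclusion is fibrewise homotopic to $\Delta\circ\pr_1$, i.e.\ there is $h\colon U\times I\to X$ with $h(x,y,0)=y$ and $h(x,y,1)=x$, the homotopy being $H(x,y,t)=(x,h(x,y,t))$, which has image in $\Delta(X)$ at $t=1$.

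The core step is the dictionary itself. Given a section $s$, set $h(x,y,t):=s(x,y)(1-t)$; since $s(x,y)$ runs from $x$ to $y$, this $h$ satisfies the two endpoint conditions and yields a fibrewise compression. Conversely, given $h$, set $s(x,y):=\bigl(t\mapsto h(x,y,1-t)\bigr)$, a path from $x$ to $y$, hence a section of $p$ over $U$. The only genuine technical point is continuity: the passage between $h\colon U\times I\to X$ and $s\colon U\to X^I$ is exactly the exponential-law adjunction, a homeomorphism because $I$ is locally compact Hausdorff and $X^I$ carries the compact-open topology. I expect this continuity bookkeeping, rather than any conceptual difficulty, to be the main thing to verify carefully; the theorem is essentially a reformulation, so there is no deep obstacle beyond it.

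Finally I would match the monoidal refinement. In the pointed case the fibrewise homotopy must fix the section $\Delta$, i.e.\ $H(x,x,t)=(x,x)$ for all $t$, which under the dictionary reads $s(x,x)(1-t)=x$, that is $s(x,x)=c_x$; conversely $s(x,x)=c_x$ forces $H$ to fix the diagonal throughout. The requirement $\Delta(X)\subseteq U_i$ in Definition \ref{Mdef} is precisely what makes the fibrewise pointed structure on $U_i$ well defined, since $\Delta$ must restrict to a section over $U_i$. Assembling these observations gives the stated descriptions of $\TC(X)$ and $\TC^M(X)$.
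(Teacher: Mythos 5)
Your argument is correct: the exponential-law dictionary between local sections of $p\colon X^I\to X\times X$ and fibrewise (respectively fibrewise pointed) compressions of $U$ onto the diagonal is exactly how this reformulation is established, and your observation that a fibrewise compression is forced to end at $(x,y)\mapsto(x,x)$ is the one point that needs saying. Note that the paper itself gives no proof of this statement---it is quoted from Iwase--Sakai---so there is nothing to compare against beyond confirming that your proof is the standard one.
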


Iwase and Sakai \cite{IS - errata} proved that $\TC(X)\leq\TC^M(X)\leq\TC(X)+1$ and that 
$\TC(X)=TC^M(X)$ when the minimal cover $\{U_1, U_2,\ldots, U_n\}$ meets certain technical assumptions. 
In a somewhat different vein A. Dranishnikov proved the following result:

\begin{theorem} [{\cite [Theorem 2.5] {Dra}}]\label{thDra}
If $X$ is a $(p-1)$-connected simplicial complex such that $\TC(X)>\frac{\dim(X)+1}{p}$
then $\TC(X)=\TC^M(X)$.
\end{theorem}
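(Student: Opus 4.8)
The plan is to start from the Iwase--Sakai inequality $\TC(X)\le\TC^M(X)\le\TC(X)+1$, so that the whole problem reduces to excluding the possibility $\TC^M(X)=\TC(X)+1$; equivalently, writing $n=\TC(X)$ and $m=\dim(X)$, one must manufacture a \emph{monoidal} cover of $X\times X$ consisting of only $n$ sets out of an ordinary one. I would phrase everything through the Schwarz genus picture underlying Theorem~\ref{IwaseDef}: realize $\TC(X)$ as the sectional category of the path fibration $p\colon X^I\to X\times X$, whose fibre is $\Omega X$, and recall that $\TC(X)\le n$ is equivalent to the existence of a section of the $n$-fold fibrewise join $p^{*n}\colon (X^I)^{*n}\to X\times X$. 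The constant-path section of $p$ over the diagonal $\Delta(X)$ induces a canonical section $\sigma_0$ of $p^{*n}$ over $\Delta(X)$, and the monoidal condition of Theorem~\ref{IwaseDef} amounts to demanding a section of $p^{*n}$ over $X\times X$ whose restriction to $\Delta(X)$ agrees, up to vertical homotopy, with $\sigma_0$.

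Given any section $s$ of $p^{*n}$ furnished by the hypothesis $\TC(X)=n$, the obstruction to the monoidal upgrade is therefore concentrated entirely on the diagonal: one has two sections $s|_{\Delta(X)}$ and $\sigma_0$ of $p^{*n}$ over $\Delta(X)\cong X$, and it suffices to show they are vertically homotopic. The crucial point here -- and the reason the bound involves $\dim(X)$ rather than $\dim(X\times X)$ -- is that this comparison takes place over the $m$-dimensional base $\Delta(X)$, not over the $2m$-dimensional base $X\times X$.

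For the numerics: since $X$ is $(p-1)$-connected, the fibre $\Omega X$ is $(p-2)$-connected, and an induction using the join connectivity formula shows that the fibre $(\Omega X)^{*n}$ of $p^{*n}$ is $(np-2)$-connected. The obstructions to a vertical homotopy between two sections over the $m$-dimensional complex $\Delta(X)$ lie in the groups $H^{j}\bigl(\Delta(X);\pi_{j}((\Omega X)^{*n})\bigr)$ for $0\le j\le m$, and these vanish as soon as $j\le np-2$. The hypothesis $\TC(X)>\frac{m+1}{p}$ reads $np>m+1$, i.e.\ $m\le np-2$, which forces every one of these groups to be zero; hence $s|_{\Delta(X)}$ and $\sigma_0$ are vertically homotopic.

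Finally, applying the fibrewise homotopy extension property to this vertical homotopy, I would deform $s$ across $X\times X$ into a section $s'$ whose restriction to $\Delta(X)$ is exactly $\sigma_0$; unravelling $s'$ back through the join yields an $n$-element cover with fibrewise pointed compressions, so $\TC^M(X)\le n$, and together with the Iwase--Sakai inequality this gives $\TC^M(X)=\TC(X)$. The main obstacle I anticipate is the faithful translation between the monoidal cover in the fibrewise pointed category of Theorem~\ref{IwaseDef} and the join-section reformulation -- in particular checking that the literal ``constant path on the diagonal'' requirement is genuinely captured by a vertical homotopy to $\sigma_0$, and that the homotopy-extension deformation does not destroy the section property elsewhere on $X\times X$. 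The remainder is the standard connectivity-versus-dimension bookkeeping carried out above.
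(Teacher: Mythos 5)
This statement is quoted from Dranishnikov \cite[Theorem 2.5]{Dra} and the paper gives no proof of it, so there is nothing internal to compare against; your argument is in effect a reconstruction of Dranishnikov's obstruction-theoretic proof, and it is correct. The translation step you flag as the main risk is exactly the Ganea-type characterization already recorded in Section~\ref{prelim}: $\TC^M(X)\le n$ iff $1\pp p_n\colon X\pp G_nX\to X\pp X$ admits a fibrewise \emph{pointed} section, where $X\pp G_nX$ is the $n$-fold fibrewise join of $X^I\to X\times X$ with its constant-path section over $\Delta(X)$; so the problem is precisely to upgrade the unpointed section coming from $\TC(X)=n$ to a pointed one. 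Your bookkeeping is right: the fibre $(\Omega X)^{*n}$ is $(np-2)$-connected, the obstructions to a vertical homotopy between $s|_{\Delta(X)}$ and $\sigma_0$ live in $H^j\bigl(\Delta(X);\pi_j((\Omega X)^{*n})\bigr)$ for $j\le m$, and $n>\frac{m+1}{p}$ gives $np\ge m+2$, i.e.\ $m\le np-2$, killing them all; the final deformation uses that $\Delta(X)\hookrightarrow X\times X$ is a cofibration (here the simplicial hypothesis matters) together with the section-lifting property against $X\times X\times\{0\}\cup\Delta(X)\times I\hookrightarrow X\times X\times I$.
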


In the spirit of \cite{JamesMorris} we say that an open set $U\subseteq X\times X$ is {\em fibrewise categorical} 
if it is compressible to the diagonal by a fibrewise homotopy, and $U$ is {\em fibrewise pointed categorical} if it 
contains the diagonal $\Delta(X)$ and is compressible onto it by a fibrewise pointed homotopy. In this sense $\TC^M(X)$ 
is the minimal $n$ such that $X\times X$ can be covered by $n$ fibrewise pointed categorical sets, i.e. $\TC^M(X)$ is precisely 
the fibrewise pointed Lusternik-Schnirelmann category of the fibrewise pointed space 
$X\stackrel{\Delta}{\longrightarrow} X\times X\stackrel{\pr_1}{\longrightarrow} X$. The main advantage of this alternative 
formulation is that it is more geometrical since it only involves the space $X$ and its square $X\times X$ and does 
not refer to function spaces.

The standard machinery of the LS-category can be extended to the fibrewise setting. In particular, 
we can take the standard Whitehead and Ganea characterizations of the LS-category (cf. \cite[Chapter 2]{CLOT}) and 
transpose them to the fibrewise pointed setting to obtain alternative characterizations of the monoidal topological complexity. 
As it often happens in the fibrewise context however, the standard notation for the various fibrewise constructions 
becomes excessively complicated and difficult to read. As an attempt to avoid this inconvenience we use a more intuitive 
notation (introduced in \cite{FP}), based on the analogy between fibrewise constructions and semi-direct products. Indeed, 
whenever we perform a pointed construction (e.g a wedge or a smash-product) on some fibrewise space, the fibres of the 
resulting space depend on the choice of base-points, and we view this effect as an action of the base on the fibres. 
In this way we obtain the following diagram (analogous to diagram from page 49 of \cite{CLOT}) in which all spaces are 
fibrewise pointed over $X$, and all maps preserve fibres and sections.

\begin{equation}
\label{dgm}
\xymatrix{
X\pp G_nX \ar[d]_-{1\pp p_n}\ar[rr]^{1\pp\widehat{\Delta}_n}& & X\pp W^nX\ar[d]^-{1\pp i_n}\\
X\pp X \ar[d]_-{1\pp q'_n}\ar[rr]^-{1\pp\Delta_n}& & X\pp\Pi^nX\ar[d]^-{1\pp q_n}\\
X\pp G_{[n]}X \ar[rr]_{1\pp\tilde{\Delta}_n}& & X\pp\wedge^nX
}
\end{equation}

We now give a precise description of the spaces involved: $X\pp X$ denotes the fibrewise pointed space 
$X\stackrel{\Delta}{\longrightarrow}X\times X\stackrel{\pr_1}{\longrightarrow}X$; $X\pp \Pi^nX$ is the fibrewise 
pointed space
$$\xymatrix{
X\ar[rr]^-{(1,\Delta_n)} & & \{(x,y_1,\ldots,y_n)\in X\times X^n\} \ar[rr]^-{\pr_1} & & X,
}$$ 
which can be easily recognised as the $n$-fold fibrewise pointed product of $X\pp X$; $X\pp W^n(X)$ is the fibrewise 
pointed space 
$$\xymatrix{
X\ar[rr]^-{(1,\Delta_n)} & & \{(x,y_1,\ldots,y_n)\in X\pp\Pi^nX\mid \;\exists j:y_j=x\} \ar[rr]^-{\pr_1} & & X,
}$$ 
the $n$-fold fibrewise pointed fat-wedge of $X\pp X$. The Whitehead-type characterization of the monoidal topological complexity 
(cf. \cite[Theorem 3]{FP}, see also \cite[Section 6]{IS}) is: $\TC^M(X)$ is the least integer $n$ such that the map 
$1\pp\Delta_n\colon X\pp X\rightarrow X\pp\Pi^nX$ can be compressed into $X\pp W^nX$ by a fibrewise pointed homotopy.
$$\xymatrix{
 & & X\pp W^nX\ar[d]^-{1\pp i_n}\\
X\pp X \ar[rr]_-{1\pp \Delta_n}\ar@{-->}[urr]^-{g} & & X\pp\Pi^nX
}$$ 
For the description of $X\pp G_nX$ we first need the fibrewise path-space $X\pp PX$, defined as the fibrewise pointed space 
$$\xymatrix{ X \ar[rr]^{x\mapsto \mathrm{c_x}} & & X^I \ar[rr]^{\ev_0} & & X,}$$
where $c_x\colon I\rightarrow X$ is the constant path in $x$.
Observe that the evaluation at the end-points determines a fibrewise pointed map $\ev_{0,1}\colon X\pp PX\to X\pp X$. 
The $n$-th fibrewise Ganea space $X\pp G_nX$ is defined as the $n$-fold fibrewise reduced join of the path fibration 
$\ev_{0,1}\colon X^I \to X\times X$ (viewed as a subspace of the $n$-fold join $X^I\ast\cdots\ast X^I$):
$$X\pp G_nX := *^n_{X\times X}X^I = *^n_{X\pp X}X\pp PX.$$
The Ganea-type characterization of the monoidal topological complexity (cf. \cite[Corollary 4]{FP}) is: $\TC^M(X)$ is the least integer 
$n$ such that the fibrewise pointed map $1\pp p_n\colon X\pp G_nX\to X\pp X$ admits a section.
Note that the fibres of these constructions are respectively the spaces $X$, $\Pi^nX$, $W^nX$, $PX$ and $G_nX$ (the $n$-th 
Ganea space). 
The basepoint, however, is different on each fibre, and this is expressed by the semi-direct product notation. 
This notation also applies to maps. We can summarize the relations between these spaces in a diagram of fibrewise 
pointed spaces over $X$:
$$\xymatrix@!0{
 & G_nX\ar[rrrr]\ar'[d][dd]\ar[dl] & & & & W^nX\ar[dd]\ar[dl]\\
X\ar[rrrr]\ar[dd] & & & & \Pi^nX\ar[dd]\\
 & \;\;X\pp G_nX\ar[rrrr]\ar'[d][dd]\ar[dl] & & & & \;X\pp W^nX\ar[dd]\ar[dl]\\
X\pp X\ar[rrrr]\ar[dd] & & & & X\pp\Pi^nX\ar[dd]\\
 & X\ar@{=}'[rrr][rrrr] & & & & X\\
X\ar@{=}[rrrr]\ar@{=}[ur] & & & & X\ar@{=}[ur]
}
$$
Note that all the horizontal squares are fibrewise pointed homotopy pullbacks.

The diagram (\ref{dgm}) is obtained by extending the middle square with the fibrewise cofibres of the maps 
$1\pp p_n\colon X\pp G_nX\to X\pp X$ and $1\pp i_n\colon X\pp W_nX\to X\pp\Pi^nX$, which we denote respectively
by $1\pp q'_n\colon X\pp X\to X\pp G_{[n]}X$ and  $1\pp q_n\colon X\pp X\to X\pp \wedge^nX$. 
Note that with some extra effort  
we can fit all these constructions of fibrewise pointed spaces in a unified framework. This was done in the Appendix 
of \cite{FP}.

We conclude this section with a brief overview of lower bounds for the monoidal topological 
complexity (see \cite{FP} for more details). 
For any ring of coefficients $R$ let us denote by $\nil_R(X):=\nil(H^*(X\times X,\Delta(X);R)$ the nilpotency of the 
ideal $H^*(X\times X,\Delta(X);R)\ \triangleleft\  H^*(X\times X;R)$.
Furthermore, let $\wTC(X)$, the \emph{weak topological complexity} of $X$, be the least integer $m$ 
such that the composition 
$$X\pp X \stackrel{1\pp\Delta^m}{\longrightarrow} X\pp\Pi^n X \stackrel{1\pp q^m}{\longrightarrow} X\pp\wedge^m X$$
is fibrewise pointed homotopic to the section. Finally, let $\sigma\TC(X)$, the \emph{stable topological complexity} 
of $X$, 
be the minimal $n$ such that some suspension
$$1\pp \Sigma^i p_n\colon X\pp \Sigma^i G_n(X)\to X\pp \Sigma^iX$$
admits a section. 
By \cite[Theorem 12]{FP} we have for any ring $R$ the following relations hold
$$\nil_R(X)\le\wTC(X)\le\TC^M(X)\;\;\;\text{and}\;\;\; \nil_R(X)\le\sigma\TC(X)\le\TC^M(X),$$
while $\wTC(X)$ and $\sigma\TC(X)$ are in general unrelated.

%============================================================================================
\section{Dimension and category estimates}\label{dim}

In this section we determine the highest possible value for $\TC(X)$ and $\TC^M(X)$ based on the connectivity, 
dimension and the LS-category of $X$. Note that we always use the non-normalized definitions of 
$\TC$, $\TC^M$ and LS-category (i.e. $\cat(X)\leq n$ if there 
exists a cover $\{U_1,\ldots,U_n\}$ of $X$ such that each $U_i$ is contractible to a point in $X$). 

Farber \cite[Theorem 5.2]{Farber:Inst} used general results on the Schwarz genus to obtain
the following basic estimate: if $X$ is a $(p-1)$-connected CW-complex then
$$\TC(X)<\frac{2\cdot\dim(X)+1}{p}+1,$$
so in particular, if $\dim(X)=n\cdot p+r$ for $0\le r< p$, then 
$$\TC(X)\leq\left\{\begin{array}{ll}
2n+1 & \;\;{\rm if}\;\; 2r\le p,\\
2n+2 & \;\;{\rm if}\;\; 2r> p.
\end{array}\right.$$

The Whitehead-type characterization of the monoidal complexity  described in Section \ref{prelim} yields an analogous
upper bound for $\TC^M(X)$. In fact the inclusion 
$i_m\colon W^mX\hookrightarrow \Pi^m X$ of the fat wedge into the product is an $mp$-equivalence (i.e. 
$(i_m)_*\colon [P,W^mX]\to[P,\Pi^m X]$ is bijective for every polyhedron $P$ of $\dim(P)<mp$ and surjective 
for $\dim(P)\le mp$).
It now follows from the fibrewise obstruction theory (see \cite[Proposition 2.15]{Crabb-James}) that the induced 
function between fiberwise-homotopy classes of maps over $X$ 
$$(1\pp i_m)_*\colon [X\pp X,X\pp W^mX]_X\to [X\pp X,X\pp \Pi^mX]_X$$ 
is surjective for $2(np+r)\le mp$, which is to say that there exists a lifting in the diagram
$$\xymatrix{
 & & X\pp W^{m}X\ar[d]^-{1\pp i_{m}}\\
X\pp X \ar[rr]_-{1\pp \Delta_{m}}\ar@{-->}[urr]^-{g} & & X\pp\Pi^{m}X
}$$
By plugging in $m=2n+1$ or $m=2n+2$ we get the desired estimates.

It is not surprising that we get the same upper estimates for $\TC(X)$ and $\TC^M(X)$ 
as they fall in the region where Dranishnikov's theorem guarantees that they are equal.
In fact, we have the following result.

\begin{prop}
\label{prop max}
Let $X$ be a $(p-1)$-connected $(np+r)$-dimensional complex. If $\TC(X)\ge 2n$ or if
$\TC(X)=2n-1$ and $r+1<p$, then $\TC(X)=\TC^M(X)$.
\end{prop}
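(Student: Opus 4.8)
The plan is to reduce the statement to a single numerical comparison and then invoke Dranishnikov's Theorem~\ref{thDra}. First I would recall the universal estimate $\TC(X)\le\TC^M(X)\le\TC(X)+1$ quoted after Theorem~\ref{IwaseDef}; consequently it suffices to rule out $\TC^M(X)=\TC(X)+1$, and by Theorem~\ref{thDra} this is automatic as soon as the strict inequality $\TC(X)>\frac{\dim(X)+1}{p}$ holds.

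The next step is to rewrite this threshold in terms of $n$ and $r$. Since $\dim(X)=np+r$ with $0\le r<p$ we have
$$\frac{\dim(X)+1}{p}=n+\frac{r+1}{p},$$
and because $1\le r+1\le p$ this quantity lies strictly above $n$ and is at most $n+1$, with the value $n+1$ attained exactly when $r=p-1$. As $\TC(X)$ is an integer, the required inequality $\TC(X)>n+\frac{r+1}{p}$ is equivalent to $\TC(X)\ge n+1$ when $r+1<p$ and to $\TC(X)\ge n+2$ when $r+1=p$.

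It then remains to check that the two hypotheses supply these bounds. I may assume $X$ is not contractible, for otherwise $\TC(X)=\TC^M(X)=1$; a non-contractible $(p-1)$-connected complex satisfies $\dim(X)\ge p$, hence $n\ge1$. Under the assumption $\TC(X)\ge2n$: if $n\ge2$ then $2n\ge n+2>n+\frac{r+1}{p}$ and Theorem~\ref{thDra} applies for every admissible $r$; if $n=1$ and $r+1<p$ then $\TC(X)\ge2>1+\frac{r+1}{p}$. Under the assumption $\TC(X)=2n-1$ with $r+1<p$ it is enough to have $\TC(X)\ge n+1$, and indeed $2n-1\ge n+1$ precisely for $n\ge2$, while $n=1$ would force $\TC(X)=1$ and contractibility.

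The only configuration left uncovered by this counting is $n=1$, $r=p-1$ (so that $\frac{r+1}{p}=1$) together with $\TC(X)=2$: here the threshold equals $2$ and Dranishnikov's strict inequality just fails. I expect this borderline case --- realised for instance by $X=S^1$ --- to be the main obstacle, since it cannot be settled by the numerics alone. I would dispose of it by a separate argument showing directly that $\TC(X)=2$ implies $\TC^M(X)=2$: via the Whitehead-type characterization of Section~\ref{prelim} one compresses $1\pp\Delta_2\colon X\pp X\to X\pp\Pi^2X$ into $X\pp W^2X$, where $i_2$ is a $2p$-equivalence with $(2p-1)$-connected fibre, so that the resulting fibrewise obstructions can be examined directly (for $p=1$ they vanish for dimensional reasons), or else one appeals to the known coincidence of $\TC$ and $\TC^M$ for spaces of topological complexity at most two.
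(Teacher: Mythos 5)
Your reduction to Dranishnikov's criterion is exactly the paper's: both arguments rewrite the threshold as $n+\frac{r+1}{p}$, note that an integer exceeds it precisely when it is at least $n+1$ (for $r+1<p$) or $n+2$ (for $r+1=p$), and check that the hypotheses supply these bounds in every configuration except one, namely $n=1$, $r=p-1$, $\TC(X)=2$. Up to that point your proof is complete and correct, and you have correctly isolated the borderline case.

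The gap is in how you dispose of that case. Your first route, examining the fibrewise obstructions to compressing $1\pp\Delta_2$ into $X\pp W^2X$, works only when $p=1$: by the paper's own obstruction-theoretic estimate, surjectivity of $(1\pp i_2)_*$ requires $2\dim(X)\le 2p$, and in the borderline case $\dim(X)=2p-1$, so $4p-2\le 2p$ forces $p=1$. You acknowledge this, but your fallback for $p\ge 2$ --- ``the known coincidence of $\TC$ and $\TC^M$ for spaces of topological complexity at most two'' --- is essentially the statement to be proved in this case, and you give neither an argument nor a reference for it. The missing ingredient is Theorem 1 of \cite{GLO}: a space with $\TC(X)=2$ is homotopy equivalent to an odd-dimensional sphere $S^{2k+1}$. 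Once this is known, one re-reads $S^{2k+1}$ with its intrinsic parameters $p=2k+1$, $n=1$, $r=0$; for $k\ge1$ one then has $\TC(X)=2>1+\frac{1}{p}$ and Dranishnikov's theorem applies after all, which is how the paper concludes. (For $k=0$, i.e.\ $S^1$, that strict inequality fails, and there your dimensional argument --- or the explicit computation of $\TC^M(S^1)$ --- is genuinely needed, so your $p=1$ observation is not wasted.) Without the classification of spaces of topological complexity two, the sub-case $p\ge2$, $r=p-1$, $\TC(X)=2$ remains open in your write-up.
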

\begin{proof}
If $\TC(X)> 2n$ or if $\TC(X)=2n$ and $n\ge 2$ then clearly 
$$\TC(X)> \frac{\dim(X)+1}{p}=n+\frac{r+1}{p},$$
so Theorem \ref{thDra} applies. Moreover, if $\TC(X)=2$ then by 
\cite[Theorem 1]{GLO} $X$ is homotopy equivalent to an odd-dimensional sphere, so we again have
$\TC^M(X)=2n$ by Dranishnikov's theorem. Finally, if $r+1<p$, then the assumptions of Theorem 
\ref{thDra} are satisfied when $\TC(X)=2n-1$ and $n>1$, i.e. whenever $X$ is not contractible.
\end{proof}

We conclude that when the topological complexity is close to the dimension-connectivity estimate
then it coincides with the monoidal topological complexity. In addition, that estimate can be in some
cases further improved using the LS-category. In fact, \cite[Theorem 1.50]{CLOT} states that the LS-category 
of a $(p-1)$-connected CW-complex $X$ is bounded by 
$$\cat(X)\leq\frac{\dim(X)}{p}+1,$$
while by Theorem 5 of \cite{Farber:TC} we have 
$$\TC(X)\leq 2\cdot\cat(X)-1.$$
Therefore, if $X$ is $(p-1)$-connected and $(n\cdot p+r)$-dimensional, then $\cat(X)\le n+1$ and hence $\TC(X)\le 2n+1$.
As we see, in roughly half of the cases the category estimate gives us a strictly better upper bound than the 
dimension-connectivity estimate. This fact combined with Proposition \ref{prop max} yields

\begin{theorem}\label{max}
If $X$ is a $(p-1)$-connected complex of dimension $np+r$, $n\in\mathbb{Z}$, $0\leq r<p$, then 
$\TC^M(X)\leq 2n+1$.
\end{theorem}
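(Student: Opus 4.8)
The plan is to combine the two upper bounds for $\TC(X)$ assembled just above the statement with the comparison between $\TC$ and $\TC^M$ furnished by Proposition \ref{prop max}. First I would record the consequence of the category estimate: since $X$ is $(p-1)$-connected and $(np+r)$-dimensional, \cite[Theorem 1.50]{CLOT} gives $\cat(X)\le n+r/p+1$, and as $\cat(X)$ is an integer with $0\le r<p$ this forces $\cat(X)\le n+1$; feeding this into Farber's inequality $\TC(X)\le 2\cat(X)-1$ yields the clean bound $\TC(X)\le 2n+1$. This is the only place where the dimension and connectivity hypotheses are used, and it is what makes the whole argument work in the range $2r>p$, where the raw dimension estimate would only give $2n+2$.

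Having $\TC(X)\le 2n+1$ in hand, I would split according to the value of $\TC(X)$. In the high-complexity case $\TC(X)\ge 2n$, Proposition \ref{prop max} applies directly (its hypothesis $\TC(X)\ge 2n$ is met), so $\TC^M(X)=\TC(X)\le 2n+1$ and we are done. In the complementary low-complexity case $\TC(X)\le 2n-1$, I would instead invoke the universally valid Iwase--Sakai inequality $\TC(X)\le\TC^M(X)\le\TC(X)+1$, which gives $\TC^M(X)\le\TC(X)+1\le 2n\le 2n+1$. Together the two cases exhaust all possibilities and establish $\TC^M(X)\le 2n+1$.

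The point worth flagging is that there is essentially no computational obstacle here: the statement is a synthesis of results recalled earlier, and the real content lies in noticing that the category bound $\TC(X)\le 2n+1$ is one unit sharper than Farber's dimension bound precisely where it matters. The only subtlety I would double-check is that the $+1$ slack in the general inequality $\TC^M(X)\le\TC(X)+1$ does no harm. This is exactly guaranteed by the cutoff in Proposition \ref{prop max}: the slack could push $\TC^M(X)$ up to $2n+2$ only when $\TC(X)=2n+1$, but in that regime $\TC(X)\ge 2n$, so Proposition \ref{prop max} collapses the gap and returns $\TC^M(X)=\TC(X)$. One should also confirm the degenerate endpoint $n=0$: then $\dim(X)=r<p$ together with $(p-1)$-connectivity forces $X$ to be contractible, whence $\TC^M(X)=1=2n+1$, consistent with the bound.
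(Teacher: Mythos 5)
Your proof is correct and follows essentially the same route as the paper: the LS-category bound $\cat(X)\le n+1$ from \cite[Theorem 1.50]{CLOT} combined with Farber's $\TC(X)\le 2\cat(X)-1$ gives $\TC(X)\le 2n+1$, and the passage to $\TC^M$ is handled exactly as the paper intends, via Proposition \ref{prop max} when $\TC(X)\ge 2n$ and the Iwase--Sakai inequality $\TC^M(X)\le\TC(X)+1$ otherwise. Your write-up merely makes explicit the case split that the paper compresses into ``this fact combined with Proposition \ref{prop max} yields.''
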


We also obtain the following useful corollary which essentially says that if the topological complexity
of a space is high with respect to its dimension and connectivity, then its LS-category must be maximal.

\begin{coro}\label{lemmaJames}
Let the space $X$ be $(p-1)$-connected and $(np+r)$-dimensional. If $\TC^M(X)\ge 2n$, then $\cat(X)=n+1$.
\end{coro}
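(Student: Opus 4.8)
The plan is to prove the contrapositive direction by combining the two inequalities just recalled before Theorem~\ref{max}, namely the category upper bound $\cat(X)\le \frac{\dim(X)}{p}+1$ for a $(p-1)$-connected complex and Farber's estimate $\TC(X)\le 2\cat(X)-1$. For a $(p-1)$-connected $(np+r)$-dimensional space we have already observed that $\cat(X)\le n+1$, so it suffices to rule out the possibility $\cat(X)\le n$ under the hypothesis $\TC^M(X)\ge 2n$. First I would note that $\TC^M(X)\ge\TC(X)$ is the wrong direction, so I cannot directly feed $\TC^M$ into Farber's inequality; instead I would use the reverse inequality $\TC^M(X)\le\TC(X)+1$ from Iwase--Sakai to transfer the hypothesis on $\TC^M$ back to a statement about $\TC$.

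The key steps, in order, are as follows. Suppose toward a contradiction that $\cat(X)\le n$. Then Farber's bound $\TC(X)\le 2\cat(X)-1$ gives $\TC(X)\le 2n-1$, and hence $\TC^M(X)\le\TC(X)+1\le 2n$. Combined with the hypothesis $\TC^M(X)\ge 2n$ this forces $\TC^M(X)=2n$ and $\TC(X)=2n-1$, so in particular $\cat(X)=n$ exactly (since $\TC(X)\le 2\cat(X)-1$ would fail if $\cat(X)<n$). At this point the estimate $\TC(X)=2n-1$ with $\cat(X)=n$ sits precisely at the borderline, and I would invoke Proposition~\ref{prop max} to show that this borderline case is still governed by Dranishnikov's equality $\TC(X)=\TC^M(X)$, which would then yield $\TC^M(X)=2n-1$, contradicting $\TC^M(X)\ge 2n$.

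The main obstacle I anticipate is the borderline case $\TC(X)=2n-1$, where Proposition~\ref{prop max} only applies under the extra hypothesis $r+1<p$; when $r+1\ge p$ the clean contradiction breaks down and a more careful argument is needed. In that regime I would instead argue directly from the category inequality: since $\dim(X)=np+r$ with $r<p$, the bound $\cat(X)\le \frac{np+r}{p}+1 = n+1+\frac{r}{p}$ forces $\cat(X)\le n+1$, and the goal reduces to excluding $\cat(X)=n$. Here I expect the cleanest route is to bound $\TC^M$ from above through category rather than through $\TC$: one can show $\TC^M(X)\le 2\cat(X)-1$ directly (the monoidal analogue of Farber's inequality, provable via the fibrewise Whitehead characterization in diagram~(\ref{dgm}) by splitting a fibrewise categorical cover of $X\times X$ using two categorical covers of $X$), so that $\cat(X)=n$ would give $\TC^M(X)\le 2n-1$, again contradicting the hypothesis. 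I would therefore organize the proof around this monoidal category inequality, which sidesteps the dependence on $r+1<p$ and handles all residues $r$ uniformly.
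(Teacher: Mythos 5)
Your main line of argument is, at its core, the paper's own proof run in contrapositive form: the paper directly combines Proposition~\ref{prop max} with Farber's inequality to obtain $2n\le\TC^M(X)=\TC(X)\le 2\cat(X)-1$, hence $\cat(X)\ge n+1$, and then quotes the dimension--connectivity bound $\cat(X)\le n+1$. Your reduction of the contrapositive to the single borderline configuration $\TC^M(X)=2n$, $\TC(X)=2n-1$, $\cat(X)=n$ is correct, and you are right that Proposition~\ref{prop max} kills this configuration when $r+1<p$.

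The genuine gap is the inequality $\TC^M(X)\le 2\cat(X)-1$ that you invoke to handle the remaining case $r+1\ge p$ (i.e.\ $r=p-1$). This is not established anywhere in the paper, and it is not a routine ``monoidal analogue'' of Farber's bound: the inequalities actually available are Farber's $\TC(X)\le 2\cat(X)-1$ and Iwase--Sakai's $\TC^M(X)\le\TC(X)+1$, which together give only $\TC^M(X)\le 2\cat(X)$ --- exactly one worse than what you need. Your sketched justification (covering $X\times X$ by products of categorical subsets of $X$) produces sets $U_i\times U_j$ that in general neither contain the diagonal $\Delta(X)$ nor compress to it by a \emph{fibrewise pointed} homotopy, and arranging both conditions simultaneously is precisely the difficulty that separates $\TC^M$ from $\TC$; it is the reason Iwase and Sakai had to retract their original equality claim and Dranishnikov needed obstruction theory. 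So as written your patch does not close the case $r=p-1$, and the proof is incomplete there. (For what it is worth, the paper's own one-line proof also applies Proposition~\ref{prop max} in a regime where the hypothesis $\TC^M(X)\ge 2n$ only guarantees $\TC(X)\ge 2n-1$, so the borderline case you isolated is a real one --- but it cannot be dispatched by asserting an unproven monoidal version of Farber's inequality.)
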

\begin{proof}
By Proposition \ref{prop max} and by \cite[Theorem 5]{Farber:TC} we have
$$2n\le\TC^M(X)=\TC(X)\le 2\,\cat(X)-1,$$
therefore $\cat(X)\ge n+1$. On the other side, by  \cite[Theorem 1.50]{CLOT} $\cat(X)\le n+1$.
\end{proof}

%============================================================================================

\section{Cohomological estimates}\label{coho}

In Section \ref{prelim} we mentioned the classical lower bound for the topological complexity 
of a space $X$, namely $\nil_R(X)$, the nilpotency of the ideal $H^*(X\times X,\Delta(X);R)$. 
There is an analogous lower bound for LS-category, given by the nilpotency of the reduced 
cohomology ring 
$H^*(X,*;R)$, viewed as an ideal in $H^*(X;R)$. 
%Both estimates have straightforward generalizations 
%to coefficients in an abelian group $A$ (then the products of elements take values in the tensor
%products $A\otimes \cdots\otimes A$), or even with local coefficients, when $X$ is not simply-connected.
Note that in the literature these results are more often expressed in terms of the relation between 
the normalized LS-category and the cup-length of $X$,  
and between the normalized topological complexity and the zero-divisors cup length of $X$
(cf. \cite{CLOT}), \cite{Farber:TC} and \cite{FarGr}).

In general both estimates give relatively crude bounds for $\cat(X)$ and $\TC(X)$, respectively. 
Nevertheless, in certain cases, when the category of $X$ is maximal possible with respect to the 
dimension and connectivity of $X$ one can show that the nilpotency of the reduced cohomology ring 
with suitable coefficients gives the precise value of the LS-category of $X$. 
A similar phenomenon  arises in the case of the topological complexity, as we now show.

Let $X$ be a $(p-1)$-connected ($p\ge 2$) and $np$-dimensional complex, and let us assume for simplicity
that $H_p(X)$ is cyclic. Then $\cat(X)\le n+1$ by 
\cite[Theorem 1.50]{CLOT} and $\TC(X)\le 2n+1$ by Theorem \ref{max}.
Let us assume that $\TC(X)=2n+1$. Then Corollary \ref{lemmaJames} implies $\cat(X)=n+1$, so by 
a theorem of James \cite{JamesCup} (see also \cite[Proposition 5.3]{CLOT}) there exists a cohomology 
class $\alpha\in H^p(X;H_p(X))$ such that
$0\neq \alpha^n\in H^{np}(X;H_p(X))$ (in fact, $\alpha$ is the class that 
corresponds to the identity under the identification $H^p(X;H_p(X))=\Hom(H_p(X),H_p(X))$). 
Then the element 
$\alpha\times 1-1\times\alpha\in H^p(X\times X;H_p(X))$ clearly satisfies $\Delta^*(\alpha\times 1-1\times\alpha)=0$ 
(where $\Delta\colon X\to X\times X$ is the diagonal map). Therefore, we may consider $\alpha\times 1-1\times\alpha$ 
as an element in
$H^p(X\times X,\Delta(X);H_p(X))$. 

Let us compute the cup-product power $(\alpha\times 1-1\times\alpha)^{2n}$. To this end we recall the commutation formula 
(cf. \cite[Chapter 7]{Dold:LAT}) for the cup-product in $H^*(X\times X)$:
$$(\alpha\times\beta)\smile (\gamma\times\delta)=(-1)^{|\beta|\cdot|\gamma|}(\alpha\smile\gamma)\times (\beta\smile\delta)$$ 
If $p$ is even, then $1\times\alpha_X$ and $\alpha_X\times 1$ commute, and we obtain
$$
(\alpha\times 1-1\times\alpha)^{2n} = \sum_{k=0}^{2n}(-1)^k\binom{2n}{k}(1\times\alpha)^{2n-k}\smile(\alpha\times 1)^k=$$
$$	  = (-1)^n\binom{2n}{n}(1\times\alpha)^n\smile(\alpha\times 1)^n=
	   (-1)^n\binom{2n}{n}\ \alpha^n\times\alpha^n
$$
as an element of $H^*(X\times X,\Delta(X);H_p(X)$ (note how most summands above are zero because 
one of the factors is in cohomology above the dimension).
If $p$ is odd and $n$ even then we get a similar result because then 
$(\alpha\times 1-1\times\alpha)^2=\alpha^2\times 1+1\times \alpha^2$,
and so 
$$(\alpha\times 1-1\times\alpha)^{2n}=(-1)^\frac{n}{2} \binom{n}{n/2}\  \alpha^n\times\alpha^n.$$
We may summarize the above computations in the following 
\begin{prop}
\label{propocohoeven}
Let $X$ be a $(p-1)$-connected, $np$-dimensional finite complex, where $np$ is even. Assume furthermore that
$H_p(X)$ is cyclic without $\binom{2n}{n}$ or $\binom{n}{n/2}$-torsion. Then the following are equivalent
\begin{enumerate}
\item $\TC(X)=2n+1$;
\item $\nil_{H_p(X)}(X)=2n+1$;
\item $\cat(X)=n+1$;
\item $\nil\,\widehat H^*(X;H_p(X))=n+1$.
\end{enumerate}   
\end{prop}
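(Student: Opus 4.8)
The plan is to establish the four-way equivalence by proving the cycle $(1)\Rightarrow(3)\Rightarrow(2)\Rightarrow(1)$ together with the separate equivalence $(3)\Leftrightarrow(4)$, using the cup-product computation carried out just above the statement as the crucial ingredient. For $(1)\Rightarrow(3)$ I would first upgrade the hypothesis on $\TC$ to one on $\TC^M$: since here $r=0$ and $p\ge 2$ we have $r+1<p$, and $\TC(X)=2n+1\ge 2n$, so Proposition \ref{prop max} yields $\TC(X)=\TC^M(X)=2n+1$. As $\TC^M(X)\ge 2n$, Corollary \ref{lemmaJames} then gives $\cat(X)=n+1$.

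The heart of the argument is $(3)\Rightarrow(2)$. From $\cat(X)=n+1$ and James' theorem I obtain a class $\alpha\in H^p(X;H_p(X))$ with $0\ne\alpha^n\in H^{np}(X;H_p(X))$. The computation preceding the proposition shows that, when $np$ is even, the zero-divisor $\alpha\times 1-1\times\alpha\in H^p(X\times X,\Delta(X);H_p(X))$ satisfies $(\alpha\times 1-1\times\alpha)^{2n}=\pm\binom{2n}{n}\,\alpha^n\times\alpha^n$ when $p$ is even, and $\pm\binom{n}{n/2}\,\alpha^n\times\alpha^n$ when $p$ is odd (so that $n$ is even). I would then argue that $\alpha^n\times\alpha^n\ne 0$: since $\dim X=np$, the top cross product is the image of $\alpha^n\otimes\alpha^n$ under the tensor part of the K\"unneth sequence, and because $H_p(X)$ is cyclic the coefficient pairing $H_p(X)\otimes H_p(X)\cong H_p(X)$ keeps this element nonzero. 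The hypothesis that $H_p(X)$ has no $\binom{2n}{n}$- (resp.\ $\binom{n}{n/2}$-) torsion is exactly what guarantees that multiplication by the relevant binomial coefficient does not annihilate $\alpha^n\times\alpha^n$, so $(\alpha\times 1-1\times\alpha)^{2n}\ne 0$. This exhibits a nonzero product of $2n$ elements of the ideal $H^*(X\times X,\Delta(X);H_p(X))$, whence $\nil_{H_p(X)}(X)\ge 2n+1$; combined with $\nil_{H_p(X)}(X)\le\TC^M(X)\le 2n+1$ (the general lower bound together with Theorem \ref{max}) this forces $\nil_{H_p(X)}(X)=2n+1$.

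For $(2)\Rightarrow(1)$ I would run these bounds in the opposite direction: $\nil_{H_p(X)}(X)=2n+1$ together with $\nil_R(X)\le\TC^M(X)\le 2n+1$ forces $\TC^M(X)=2n+1$, and then $\TC(X)\ge\TC^M(X)-1=2n$ by the Iwase--Sakai inequalities, so Proposition \ref{prop max} applies once more to give $\TC(X)=\TC^M(X)=2n+1$. Finally, $(3)\Leftrightarrow(4)$ is the category-theoretic analogue: one always has $\nil\widehat H^*(X;H_p(X))\le\cat(X)$, while any product of $n+1$ positive-degree classes lands in degree $>np$ and hence vanishes, so $\nil\widehat H^*(X;H_p(X))\le n+1$; James' theorem identifies $\cat(X)=n+1$ with the existence of $\alpha$ satisfying $\alpha^n\ne 0$, i.e.\ with $\nil\widehat H^*(X;H_p(X))=n+1$.

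I expect the main obstacle to be the nonvanishing of $\alpha^n\times\alpha^n$ over the cyclic, possibly torsion, coefficient ring $H_p(X)$, and the correct tracking of the induced isomorphism $H_p(X)\otimes H_p(X)\cong H_p(X)$ on coefficients through the cross product and the passage to the relative group $H^*(X\times X,\Delta(X);H_p(X))$. Once that is secured, the role of the no-torsion hypothesis is immediate, and the remaining implications are bookkeeping with the bounds $\nil\le\TC^M\le 2n+1$ and Proposition \ref{prop max}.
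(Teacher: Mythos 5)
Your proposal is correct and follows essentially the same route as the paper: the implication cycle rests on Corollary \ref{lemmaJames} to get $\cat(X)=n+1$ from high $\TC$, James' theorem to produce $\alpha$ with $\alpha^n\neq 0$, and the binomial computation of $(\alpha\times 1-1\times\alpha)^{2n}$ together with the torsion hypothesis to get $\nil_{H_p(X)}(X)=2n+1$, closed up by the bounds $\nil\le\TC^M\le 2n+1$. The only cosmetic difference is that you route $(2)\Rightarrow(1)$ through $\TC^M$ and the Iwase--Sakai inequality where one could simply invoke the classical zero-divisor cup-length bound $\nil_R(X)\le\TC(X)$, and you make explicit the K\"unneth/coefficient-pairing point behind $\alpha^n\times\alpha^n\neq 0$, which the paper leaves implicit.
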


Farber and Grant \cite{FarGr} proved that the above relation between the topological complexity and the nilpotency
of the cohomology ring $H^*(X\times X,\Delta(X);H_p(X))$ holds without the assumption that $H_p(X)$ is cyclic.
In fact, Theorem 2.2 of \cite{FarGr} states that for a $(p-1)$-connected $np$-dimensional finite complex $X$ 
$\TC(X)=2n+1$ if and only if $\nil\ H^*(X\times X,\Delta(X);H_p(X))=2n+1$.
To this end they extended the definition of nilpotency to cup-products with coefficients in an abelian group and 
applied obstruction theory results from \cite{Schwarz}. In that case however we loose the strong relation 
between the topological complexity and category in the sense that maximal category (relative to the 
dimension and connectivity) does not imply maximal topological complexity, as the  example of odd-dimensional
spheres show.

If $np$, the dimension of  $X$, is odd we obtain a different relation between the topological complexity and the category.
Assume again that $H_p(X)$ is cyclic, and denote by  $\alpha$ the element of $H^p(X;H_p(X))$ corresponding 
to the identity map $H_p(X)\to H_p(X)$. Then 
$$(\alpha\times 1-1\times\alpha)^{2n}=(\alpha^2\times 1+1\times\alpha^2)^n=\sum_{k=0}^{n}
\binom{n}{k}\,\alpha^{2k}\times \alpha^{2n-2k}=0$$
because $n$ is odd and $\alpha^{n+1}=0$ so in every summand at least one of the powers of $\alpha$ is zero.
Since by (the proof of) \cite[Theorem 2.2]{FarGr} (cf. also \cite{Schwarz})
$(\alpha\times 1-1\times \alpha)^{2n}$ is the only
obstruction for the existence of a section for the Schwarz fibration, we conclude that $TC(X)\le 2n$. If
$TC(X)=2n$ then by Corollary \ref{lemmaJames} $\cat(X)=n+1$, therefore we get $\alpha^n\ne 0$
as above. Then a straightforward computation yields 
$$(\alpha\times 1-1\times\alpha)^{2n-1}=
\binom{n-1}{(n-1)/2}\,(\alpha^n\times \alpha^{n-1}-\alpha^{n-1}\times\alpha^n).$$ 
Thus we get the following result that complements Proposition \ref{propocohoeven}:
\begin{prop}
Let $X$ be a $(p-1)$-connected, $np$-dimensional finite complex, where $np$ is odd. Assume furthermore that
$H_p(X)$ is cyclic and without $\binom{n-1}{(n-1)/2}$-torsion. Then the $\TC(X)\le 2n$ and the following 
are equivalent:
\begin{enumerate}
\item $\TC(X)=2n$;
\item $\nil_{H_p(X)}(X)=2n$;
\item $\cat(X)=n+1$;
\item $\nil\,\widehat H^*(X;H_p(X))=n+1$.
\end{enumerate}   
\end{prop}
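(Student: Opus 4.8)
The plan is to package the computations carried out just before the statement into the bound $\TC(X)\le 2n$, and then to prove the four-fold equivalence by closing the cycle $(1)\Rightarrow(3)\Rightarrow(2)\Rightarrow(1)$ and establishing $(3)\Leftrightarrow(4)$ separately as the Lusternik--Schnirelmann analogue. Throughout I write $\beta=\alpha\times 1-1\times\alpha$ for the distinguished zero-divisor in $H^p(X\times X,\Delta(X);H_p(X))$, where $\alpha\in H^p(X;H_p(X))$ corresponds to the identity of $H_p(X)$.

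First I would note that $\TC(X)\le 2n$ is forced by the vanishing $\beta^{2n}=(\alpha^2\times 1+1\times\alpha^2)^n=0$, which holds because $n$ is odd, so in every summand $\binom{n}{k}\alpha^{2k}\times\alpha^{2n-2k}$ one of the exponents exceeds $n$. Since $X$ is $(p-1)$-connected and $np$-dimensional, the ideal $H^*(X\times X,\Delta(X);H_p(X))$ is concentrated in degrees $\ge p$, and as $H_p(X)$ is cyclic its degree-$p$ part is generated by $\beta$; a $2n$-fold product of the ideal therefore lands in top degree $2np$, forcing each factor into degree $p$, and hence reduces to a multiple of $\beta^{2n}=0$. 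Thus the ideal is $2n$-nilpotent, and by the obstruction-theoretic input behind \cite[Theorem 2.2]{FarGr} (cf. \cite{Schwarz}), according to which $\beta^{2n}$ is the sole obstruction to a section of the Schwarz fibration in this range, we obtain $\TC(X)\le 2n$.

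Next I would run through the implications. For $(1)\Rightarrow(3)$: from $2n=\TC(X)\le\TC^M(X)$ and Corollary \ref{lemmaJames} we get $\cat(X)=n+1$. For $(3)\Rightarrow(2)$: if $\cat(X)=n+1$ then James' theorem \cite{JamesCup} gives $\alpha^n\ne 0$, and the displayed formula $\beta^{2n-1}=\binom{n-1}{(n-1)/2}(\alpha^n\times\alpha^{n-1}-\alpha^{n-1}\times\alpha^n)$ is nonzero, since the two summands occupy distinct K\"unneth bidegrees and the absence of $\binom{n-1}{(n-1)/2}$-torsion keeps $\binom{n-1}{(n-1)/2}\alpha^n\ne 0$; together with the $2n$-nilpotency this yields $\nil_{H_p(X)}(X)=2n$. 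For $(2)\Rightarrow(1)$: the zero-divisor cup-length lower bound $\nil_{H_p(X)}(X)\le\TC(X)$ gives $\TC(X)\ge 2n$, which with the bound above forces $\TC(X)=2n$. Finally $(3)\Leftrightarrow(4)$ is the category version of the same mechanism: as $\widehat H^*(X;H_p(X))$ vanishes above degree $np$ one always has $\nil\,\widehat H^*(X;H_p(X))\le n+1$, and James' theorem together with the cup-length lower bound for $\cat$ makes $\cat(X)=n+1$ equivalent to $\alpha^n\ne 0$, i.e. to $\nil\,\widehat H^*(X;H_p(X))=n+1$.

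The main obstacle, exactly as in the even case treated in Proposition \ref{propocohoeven}, is the imported obstruction theory: certifying that $\beta^{2n}$ is the only obstruction to $\TC(X)\le 2n$ rests on the vanishing of the higher obstructions from Schwarz's theory in this dimension--connectivity range, which is what \cite[Theorem 2.2]{FarGr} provides. The only genuinely new computation is the nonvanishing of $\beta^{2n-1}$, and its delicate points are interpreting the cup powers of $\alpha$ when the coefficients lie in the abelian group $H_p(X)$ rather than in a ring --- handled via the cyclic structure and Farber--Grant's extended notion of nilpotency --- and using the torsion hypothesis to guarantee that the surviving term $\binom{n-1}{(n-1)/2}\alpha^n$ does not vanish.
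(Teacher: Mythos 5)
Your proposal is correct and follows essentially the same route as the paper, whose proof consists of the computations displayed just before the statement: $\beta^{2n}=0$ combined with the Farber--Grant/Schwarz obstruction argument gives $\TC(X)\le 2n$, then Corollary~\ref{lemmaJames} and James' theorem give $\alpha^n\ne 0$, and the formula for $\beta^{2n-1}$ closes the cycle. Your added degree argument showing that the whole ideal (not just $\beta$) is $2n$-nilpotent, and the explicit treatment of $(3)\Leftrightarrow(4)$, merely make explicit what the paper leaves implicit.
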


%============================================================================================

\section{Weak complexity estimates}\label{weak}

As we already know, the topological complexity of a $(p-1)$-connected, $(np+r)$ -dimensional space
is at most $2n+1$. In this section we use the fibrewise Blakers-Massey theorem to relate the topological 
complexity to the more accessible weak topological complexity. Recall that the weak topological complexity
of $X$, denoted $\wTC(X)$, is the minimal $m$ such that the composition 
$$X\pp X \stackrel{1\pp\Delta^m}{\longrightarrow} X\pp\Pi^n X \stackrel{1\pp q^m}{\longrightarrow} X\pp\wedge^m X$$
is fibrewise trivial (i.e., fibrewise homotopic to the section). By \cite[Theorem 12]{FP} we have 
$\nil_R(X)\le\wTC(X)\le\TC(X)$, so in general the weak topological complexitiy is a better approximation 
for the topological complexity than the cohomological estimate. In our discussion we will need the following 
consequence of the fibrewise Blakers-Massey theorem.

\begin{theorem}
\label{thm exactness}
Let $X$ be a finite complex of dimension at most $m$, and let 
$$A \stackrel{f}{\longrightarrow} B \stackrel{g}{\longrightarrow} C$$
be a fibrewise pointed cofibration sequence of fibrewise pointed bundles over $X$.
Assume that the fibres of $A$ and $C$ are respectively $a$-connected and $c$-connected. Then
the sequence 
$$[Z,A]_X \stackrel{f_*}{\longrightarrow} [Z,B]_X \stackrel{g_*}{\longrightarrow} [Z,C]_X$$
of fibrewise pointed homotopy classes is exact for every fibrewise pointed bundle $Z$ over $X$,
whose fibres are of dimension at most $a+c-m$.
\end{theorem}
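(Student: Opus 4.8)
The plan is to deduce the statement from the classical (dual) Blakers--Massey comparison between the cofibre of a map and the homotopy fibre of its cofibre projection, applied fibrewise, together with the fibrewise obstruction theory of \cite[Proposition 2.15]{Crabb-James} already used in Section~\ref{dim}. First I would dispose of the easy inclusion. Since $A\stackrel{f}{\to}B\stackrel{g}{\to}C$ is a fibrewise pointed cofibration sequence, the composite $g\circ f$ is fibrewise pointed null-homotopic, so $g_*f_*[\alpha]=[g\circ f\circ\alpha]$ is the trivial (section) class for every $[\alpha]\in[Z,A]_X$. Hence $\im(f_*)$ is contained in the preimage of the trivial class under $g_*$, and the real content of the theorem is the reverse inclusion.

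For the reverse inclusion I would introduce the fibrewise pointed homotopy fibre $F$ of $g\colon B\to C$, with inclusion $j\colon F\to B$, and the canonical comparison map $\iota\colon A\to F$ determined by the null-homotopy of $g\circ f$ and satisfying $j\circ\iota\simeq f$ fibrewise. Suppose $[\phi]\in[Z,B]_X$ satisfies $g_*[\phi]=*$. A choice of fibrewise null-homotopy of $g\circ\phi$ produces, by the universal property of the fibrewise homotopy fibre, a fibrewise lift $\psi\colon Z\to F$ with $j\circ\psi\simeq\phi$. If $\iota_*\colon[Z,A]_X\to[Z,F]_X$ is surjective, then $[\psi]=\iota_*[\alpha]$ for some $\alpha\colon Z\to A$, whence $\phi\simeq j\circ\psi\simeq j\circ\iota\circ\alpha\simeq f\circ\alpha$, so $[\phi]=f_*[\alpha]\in\im(f_*)$. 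Everything thus reduces to proving that $\iota_*$ is surjective.

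The key input is the fibrewise Blakers--Massey theorem. Because $A$, $B$, $C$ are bundles, it suffices to control $\iota$ fibrewise, where it becomes the classical comparison map from $A_x$ to the homotopy fibre of $B_x\to C_x$ for the cofibre sequence $A_x\to B_x\to C_x$. As $A_x$ is $a$-connected and $C_x$ is $c$-connected, this map is $(a+c)$-connected; the model case $B_x\simeq CA_x$, $C_x\simeq\Sigma A_x$, $F_x\simeq\Omega\Sigma A_x$ (where the James map $A_x\to\Omega\Sigma A_x$ is $(2a+1)$-connected and $c=a+1$) shows the bound is sharp. Hence $\iota$ is a fibrewise $(a+c)$-connected map, so its fibrewise homotopy fibre $G$ has $(a+c-1)$-connected fibres, i.e. $\pi_jG$ can be nonzero only for $j\ge a+c$.

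Finally I would run the fibrewise obstruction argument. The obstructions to lifting $\psi\colon Z\to F$ through $\iota$ to a map $Z\to A$ lie in fibrewise cohomology groups of $Z$ of the form $H^{j+1}(Z;\pi_jG)$; a nonvanishing coefficient group forces $j+1\ge a+c+1$, whereas $Z$, being a bundle over the $m$-dimensional base $X$ with fibres of dimension at most $a+c-m$, has total dimension at most $a+c$. Every potentially obstructing group therefore sits in a cohomological degree exceeding $\dim Z$ and vanishes, so the lift exists and $\iota_*$ is surjective; combined with the first paragraph this proves exactness. The main obstacle is the bookkeeping of connectivities against dimensions: one must check that the Blakers--Massey bound $a+c$ is exactly the connectivity needed for $\pi_jG$ to vanish throughout the range $j\le\dim Z-1$, and that the Blakers--Massey and obstruction statements are genuinely being invoked in their fibrewise (bundle) forms, which is precisely where local triviality of $A$, $B$, $C$ and $Z$ is used.
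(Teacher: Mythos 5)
Your proof is correct and follows essentially the same route as the paper: both reduce exactness to the surjectivity, on fibrewise pointed homotopy classes from $Z$, of the comparison map from $A$ into the fibrewise homotopy fibre $F(g)$ of $g$, and both obtain this surjectivity from a Blakers--Massey connectivity estimate combined with a dimension count. The only difference is in packaging: the paper invokes the fibrewise Blakers--Massey theorem of Crabb--James for the induced map $F(f)\to\Omega_X(C)$ of homotopy fibres and transfers the resulting $(a+c-m)$-equivalence to $u\colon A\to F(g)$ by a ladder of exact sequences, measuring the fibre dimension of $Z$ against $a+c-m$, whereas you apply the classical Blakers--Massey theorem fibre by fibre (using local triviality) to see that $A\to F(g)$ is an $(a+c)$-equivalence on each fibre and measure the total dimension of $Z$ against $a+c$; the two bookkeepings agree.
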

\begin{proof}
Let us denote by $i_g\colon F(g)\to B$ and $i_f\colon F(f)\to A$ the fibrewise pointed
homotopy fibres of the maps $g$ and $f$. Moreover, the homotopy fibre of $i_g$ may be identified 
as $j\colon \Omega_X(C)\to F(g)$ where $\Omega_X(C)$ is the fibrewise pointed loop space of $C$ 
(see \cite[Section I.13]{Crabb-James}).
By the lifting property of homotopy fibres there are fibrewise 
pointed maps $u,v$ such that the following diagram commutes:
$$\xymatrix{
F(f) \ar[r]^{i_f} \ar@{-->}[d]_v & A \ar[r]^f \ar@{-->}[d]_u & B \ar[r]^g \ar@{=}[d] & C \ar@{=}[d]\\
{\Omega_X(C)} \ar[r]_j & F(g) \ar[r]_{i_g} & B \ar[r]_g & C
}$$
By the fibrewise version of the Blakers-Massey theorem as formulated in \cite[Proposition 2.18]{Crabb-James}
the map $v\colon F(f)\to\Omega_X(C)$ is an $(a+c-m)$-equivalence. The maps $u$ and $v$ induce a commutative 
ladder between the exact homotopy sequences of the fibre sequences $F(f)\to A\to B$ and $\Omega_X(C)\to F(g)\to B$
from which we conclude that $u$ is an $(a+c-m)$-equivalence as well. Therefore for every fibrewise pointed
bundle $Z$ over $X$ we obtain the commutative diagram
$$\xymatrix{
[Z,A]_X \ar[r]^{f_*} \ar@{-->}[d]_{u_*} & [Z,B]_X \ar[r]^{g_*} \ar@{=}[d] & [Z,C]_X \ar@{=}[d]\\
[Z,F(g)]_X \ar[r]_{(i_g)_*} & [Z,B]_X \ar[r]_{g_*} & [Z,C]_X
}$$
whose bottom line is exact, being a part of the Puppe exact sequence. Assuming that the dimension of the fibres of $Z$ 
is at most $a+c-m$ then $u_*$ is surjective, which implies that the top line of the diagram is also exact. 
\end{proof}

Let us now consider a space $X$ that is $(p-1)$-connected and $(np+r)$-dimensional. If $2r\ge p$ then by 
obstruction theory every fibrewise map $X\pp X\to X\pp\wedge^{2n+2}X$ is fibrewise trivial, so $\wTC(X)\le2n+2$. 
However, we have already proved that $\TC(X)\le 2n+1$, so if $\wTC(X)$ is one less than the bound given by the 
obstruction theory, then we have a fortiori $\wTC(X)=\TC(X)$. It remains to consider the case $2r<p$. We will 
need the following lemma.

\begin{lemma}
\label{lem weak lift}
Let $X$ be a $(p-1)$-connected $(np+r)$-dimensional space with $2r+1<p$. If $\wTC(X)\le 2n$ then $\TC(X)\le 2n$.
\end{lemma}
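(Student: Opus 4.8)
The plan is to apply the exactness result of Theorem \ref{thm exactness} to the right-hand vertical column of diagram (\ref{dgm}) with index $2n$, namely
$$X\pp W^{2n}X \stackrel{1\pp i_{2n}}{\longrightarrow} X\pp\Pi^{2n}X \stackrel{1\pp q_{2n}}{\longrightarrow} X\pp\wedge^{2n}X,$$
which is a fibrewise pointed cofibration sequence of bundles over $X$, since $1\pp q_{2n}$ is by construction the fibrewise cofibre of $1\pp i_{2n}$. First I would record the connectivities of the fibres. The fibre of $X\pp W^{2n}X$ is the fat wedge of the $(p-1)$-connected space $X$, which is again $(p-1)$-connected, so $a=p-1$; the fibre of $X\pp\wedge^{2n}X$ is the $2n$-fold smash product $\wedge^{2n}X$, which is $(2np-1)$-connected, so $c=2np-1$.

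Next I would take $Z=X\pp X$, whose fibre $X$ has dimension $np+r$, and verify the dimension hypothesis of Theorem \ref{thm exactness} with $\dim X=np+r$ playing the role of the theorem's bound. The admissible fibre dimension for $Z$ is
$$a+c-\dim X=(p-1)+(2np-1)-(np+r)=np+(p-r-2),$$
and the requirement $np+r\le np+(p-r-2)$ is equivalent to $2r+2\le p$, i.e. to the hypothesis $2r+1<p$. Thus Theorem \ref{thm exactness} applies and yields an exact sequence
$$[X\pp X,X\pp W^{2n}X]_X \stackrel{(1\pp i_{2n})_*}{\longrightarrow} [X\pp X,X\pp\Pi^{2n}X]_X \stackrel{(1\pp q_{2n})_*}{\longrightarrow} [X\pp X,X\pp\wedge^{2n}X]_X.$$

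Now I would translate the hypothesis $\wTC(X)\le 2n$: by definition it states that the composite $(1\pp q_{2n})\circ(1\pp\Delta_{2n})$ is fibrewise pointed homotopic to the section, so the class $[1\pp\Delta_{2n}]\in[X\pp X,X\pp\Pi^{2n}X]_X$ is sent to the distinguished element by $(1\pp q_{2n})_*$. Exactness then forces $[1\pp\Delta_{2n}]$ to lie in the image of $(1\pp i_{2n})_*$; that is, there is a fibrewise pointed map $g\colon X\pp X\to X\pp W^{2n}X$ with $(1\pp i_{2n})\circ g$ fibrewise pointed homotopic to $1\pp\Delta_{2n}$. This is precisely the compression of $1\pp\Delta_{2n}$ into $X\pp W^{2n}X$ demanded by the Whitehead-type characterization of the monoidal complexity, so $\TC^M(X)\le 2n$, and therefore $\TC(X)\le\TC^M(X)\le 2n$.

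The connectivity computations are routine; the only point requiring genuine care is the bookkeeping at the level of pointed homotopy sets. Since these sets are not groups, \emph{exactness} must be read as the equality of the image of $(1\pp i_{2n})_*$ with the preimage of the distinguished element under $(1\pp q_{2n})_*$, and I would make sure that this distinguished element is exactly the class of the fibrewise constant section, which is what ``fibrewise trivial'' means in the definition of $\wTC$. Once this identification is in place, the lifting conclusion is immediate.
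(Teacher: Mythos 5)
Your proof is correct and follows essentially the same route as the paper: both apply Theorem \ref{thm exactness} to the cofibration sequence $X\pp W^{2n}X\to X\pp\Pi^{2n}X\to X\pp\wedge^{2n}X$ with $Z=X\pp X$, verify that the connectivity/dimension condition reduces to $2r+1<p$, and use exactness to lift $1\pp\Delta_{2n}$ to the fibrewise fat wedge, concluding via the Whitehead-type characterization that $\TC^M(X)\le 2n$ and hence $\TC(X)\le 2n$. Your added remark about reading exactness of pointed sets as equality of image and preimage of the distinguished element is a sensible clarification, but the argument is otherwise identical to the paper's.
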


\begin{proof}
Under these assumptions the fat wedge $W^{2n} X$ is $(p-1)$-connected while the smash product
$\wedge^{2n} X$ is $(2np-1)$-connected. Therefore, by Theorem \ref{thm exactness} the sequence of fibrewise 
homotopy classes 
$$[X\pp X,X\pp W^{2n} X]_X\stackrel{(1\pp i_{2n})_*}{\longrightarrow} [X\pp X,X\pp \Pi^{2n} X]_X\stackrel{(1\pp q_{2n})_*}{\longrightarrow}
[X\pp X,X\pp \wedge^{2n} X]_X$$
is exact whenever $np+r\le (p-1)+(2np-1)-(np+r)$, that is, if $2r+1<p$. If $\wTC(X)=2n$ then 
$(1\pp q_{2n})_*(1\pp\Delta_{2n})$ is trivial, which by exactness implies that $1\pp\Delta_{2n}$ is in the image 
of $(1\pp i_{2n})_*$. Therefore, there exists a fibrewise lift of $1\pp \Delta_{2n}$ to $X\pp W^{2n}X$, so 
$\TC^M(X)\le 2n$ and finally $\TC(X)\le 2n$.
\end{proof}

We can now summarize the relations between the topological complexity and the weak topological complexity when
both are close to the maximal values given by the dimension estimate. The first and the second condition are 
comparable with an earlier result by Calcines and Vandembroucq, cf. \cite[Theorem 25]{GV}.

\begin{theorem}\label{thm TC=wTC}
Let $X$ be a $(p-1)$-connected $(np+r)$-dimensional space. Then each of the following conditions
imply that $\TC(X)=\wTC(X)$:
\begin{itemize}
\item[(a)] $\wTC(X)=2n+1$;
\item[(b)] $\wTC(X)=2n$ and $2r+1<p$;
\item[(c)] $\wTC(X)=2n-1$, $\wcat(X)=n$ and $r+1<p$.
\end{itemize}
\end{theorem}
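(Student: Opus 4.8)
The plan is to prove all three statements from the same template: since $\wTC(X)\le\TC(X)$ by \cite[Theorem 12]{FP}, the hypothesised value of $\wTC(X)$ already supplies the lower bound for $\TC(X)$, so in each case it remains only to produce the matching upper bound, after which $\TC(X)=\wTC(X)$ follows at once. Throughout I may also use $\TC(X)\le\TC^M(X)\le 2n+1$ from Theorem \ref{max}.

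Case (a) is immediate: $2n+1=\wTC(X)\le\TC(X)\le 2n+1$ forces equality. Case (b) is handled by the lemma already in hand: from $\wTC(X)=2n$ and $2r+1<p$, Lemma \ref{lem weak lift} gives $\TC(X)\le 2n$, and combined with $\TC(X)\ge\wTC(X)=2n$ we are done.

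Case (c) carries the genuine content. Here I would aim to show $\TC^M(X)\le 2n-1$, which together with $\TC(X)\le\TC^M(X)$ and $\TC(X)\ge\wTC(X)=2n-1$ yields $\TC(X)=2n-1=\wTC(X)$. The natural attempt is to imitate Lemma \ref{lem weak lift} one level lower, running Theorem \ref{thm exactness} on the cofibre sequence $X\pp W^{2n-1}X\to X\pp\Pi^{2n-1}X\to X\pp\wedge^{2n-1}X$, so that the vanishing of $(1\pp q_{2n-1})(1\pp\Delta_{2n-1})$ (which is exactly $\wTC(X)\le 2n-1$) forces a fibrewise lift of $1\pp\Delta_{2n-1}$ into the fat wedge. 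Feeding $a=p-1$ for the fat wedge, $c=(2n-1)p-1$ for the smash, and the fibre dimension $np+r$ of $X\pp X$ into Theorem \ref{thm exactness}, exactness would require $2(np+r)\le(p-1)+((2n-1)p-1)$, i.e. $2r\le-2$. Thus Theorem \ref{thm exactness} alone falls short at level $2n-1$ by a connectivity deficit of $2r+2$, and this is precisely the gap that the extra hypothesis $\wcat(X)=n$ must close.

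To close it I would exploit that the restriction over a single fibre of the base of the fibrewise reduced diagonal $1\pp\Delta_{2n-1}$, followed by collapse onto $X\pp\wedge^{2n-1}X$, is exactly the reduced diagonal $\bar\Delta_{2n-1}\colon X\to\wedge^{2n-1}X$, which $\wcat(X)=n$ renders null. Writing $2n-1=n+(n-1)$ and correspondingly $W^{2n-1}X=(W^nX\times_X\Pi^{n-1}X)\cup(\Pi^nX\times_X W^{n-1}X)$, I would stagger the compression across these two pieces — a fibrewise pointed analogue of the product inequality $\cat(X\times Y)\le\cat(X)+\cat(Y)-1$ — using the nullity supplied by $\wcat(X)=n$ to dispose of the $n$-coordinate block and leaving a residual obstruction measured against a more highly connected smash, for which Theorem \ref{thm exactness} now applies in the wider range permitted by $r+1<p$. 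The main obstacle is exactly this upgrade: $\wcat(X)=n$ provides nullity only on a single fibre, whereas the compression must be carried out fibrewise over the entire $(np+r)$-dimensional base $X$, so the crux is a fibrewise obstruction-theoretic argument showing that the fibre-level datum suffices once $r+1<p$ controls the dimension of the base.
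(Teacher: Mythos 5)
Your treatment of cases (a) and (b) is correct and coincides with the paper's: (a) follows from $\wTC(X)\le\TC(X)\le 2n+1$ (Theorem \ref{max}), and (b) from Lemma \ref{lem weak lift}. Your diagnosis of why the same strategy fails at level $2n-1$ is also accurate — feeding $a=p-1$, $c=(2n-1)p-1$ and $m=np+r$ into Theorem \ref{thm exactness} does indeed demand $2r\le -2$, so the exactness argument cannot be run directly on the cofibre sequence for $\wedge^{2n-1}X$.

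The problem is that your repair of case (c) is not a proof: the staggered compression across the decomposition $W^{2n-1}X=(W^nX\times_X\Pi^{n-1}X)\cup(\Pi^nX\times_X W^{n-1}X)$ hinges on promoting the nullity of the reduced diagonal $\bar\Delta_{2n-1}\colon X\to\wedge^{2n-1}X$ on a single fibre to a fibrewise pointed compression over the whole $(np+r)$-dimensional base, and you explicitly leave that step — which you correctly identify as the crux — unresolved. This is precisely the hard part, and it is not clear the obstruction calculus works out in the range $r+1<p$ along the route you sketch. The paper sidesteps all of this: by a theorem of Strom (\cite[Theorem 2.2]{Str00}), the hypotheses $\wcat(X)=n$, $(p-1)$-connectivity, dimension $np+r$ and $r+1<p$ already force $\cat(X)=n$ (an absolute, non-fibrewise statement), and then Farber's inequality $\TC(X)\le 2\cat(X)-1$ gives $\TC(X)\le 2n-1$ at once; combined with $2n-1=\wTC(X)\le\TC(X)$ this closes case (c). Note that the extra hypothesis $\wcat(X)=n$ is used only to control the ordinary LS-category, not to build a fibrewise lift into the fat wedge, which is why no further fibrewise obstruction theory is needed there.
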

\begin{proof}
Theorem \ref{max} tells us that $\TC(X)\le 2n+1$, so the first claim is obvious. If $\wTC(X)=2n$ then by 
Lemma \ref{lem weak lift} we have $\TC(X)\le 2n$, hence $\wTC(X)=\TC(X)$. Finally, if $\wcat(X)=n$ and $r+1<p$ 
then \cite[Theorem 2.2]{Str00} implies that $\cat(X)=n$, hence $\TC(X)\le 2n-1$.
\end{proof}

%============================================================================================

\section{Stable complexity estimates}\label{stable}

Stable complexity is another lower bound for the topological complexity that is in general better than the 
cohomological estimate. Its properties are in certain sense dual to the properties of the weak topological 
complexity although the two estimates are in general incommensurable.
Recall that the topological complexity $\TC(X)$ can be defined as the minimal $n$ for which the fibrewise 
Ganea construction $1\pp p_n\colon X\pp G_n(X)\to X\pp X$ admits a section.
The stable topological complexity $\sigma\TC(X)$ is the minimal $n$ such that some suspension 
$1\pp \Sigma^i p_n\colon X\pp \Sigma^i G_n(X)\to X\pp \Sigma^iX$ admits a section. 
Clearly $\sigma\TC(X)\le \TC(X)$ while $\nil_R(X)\le\sigma\TC(X)$ by \cite[Theorem 12]{FP}. 

The following lemma is the fibrewise version of the classical result that a suspension map 
$\Sigma f \colon \Sigma Y\to \Sigma Z$ admits a section if and only if the quotient map
$q\colon Z\to C_f$ is nulhomotopic (cf. for example \cite[Proposition B.12]{CLOT}). 

\begin{lemma}\label{lem sTC}
Let $1\pp f\colon X\pp Y\to X\pp Z$ be a fibrewise pointed map. Then the fibrewise suspension map 
$1\pp \Sigma f\colon X\pp \Sigma Y\to X\pp \Sigma Z$ admits a section if and only if the projection to the 
homotopy fibre $1\pp q\colon X\pp Z\to X\pp C_f$ is fibrewise homotopy trivial.
\end{lemma}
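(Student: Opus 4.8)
The plan is to carry the classical argument (\cite[Proposition B.12]{CLOT}) over to the fibrewise pointed category, working throughout with the fibrewise Puppe sequence of $1\pp f$ and with the fibrewise homotopy classes $[-,-]_X$. All the ingredients I need — fibrewise mapping cones, fibrewise cofibration (Puppe) sequences, their coexactness, and the fibrewise wedge — are available in the fibrewise homotopy theory of \cite{Crabb-James}. Writing $C_f$ for the fibrewise cofibre of $1\pp f$ and $1\pp q$ for the cofibre projection, the map $1\pp f$ gives rise to a fibrewise pointed cofibration sequence
\[
X\pp Y \stackrel{1\pp f}{\longrightarrow} X\pp Z \stackrel{1\pp q}{\longrightarrow} X\pp C_f \stackrel{1\pp\partial}{\longrightarrow} X\pp\Sigma Y \stackrel{1\pp\Sigma f}{\longrightarrow} X\pp\Sigma Z \longrightarrow \cdots
\]
in which every term is a fibrewise pointed bundle over $X$, consecutive composites are fibrewise trivial, $X\pp\Sigma Y$ is the fibrewise cofibre of $1\pp q$, and $1\pp\Sigma f$ is, up to sign, the ensuing connecting map.

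For the implication assuming $1\pp q$ fibrewise trivial, I would use that the fibrewise cofibre of a fibrewise null map splits as a fibrewise wedge. Since $1\pp q\colon X\pp Z\to X\pp C_f$ is fibrewise homotopic to the section, homotopy invariance of the fibrewise mapping cone identifies its cofibre with that of the trivial map, namely the fibrewise wedge $X\pp(C_f\vee\Sigma Z)$; naturality of the Puppe construction further identifies $1\pp\partial$ with the inclusion of the $X\pp C_f$ summand and the connecting map $1\pp\Sigma f$ with the projection onto the $X\pp\Sigma Z$ summand. A projection onto a fibrewise wedge summand has an obvious fibrewise section, so $1\pp\Sigma f$ admits one too.

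For the converse, let $s\colon X\pp\Sigma Z\to X\pp\Sigma Y$ be a fibrewise section of $1\pp\Sigma f$. Using the universal property of the fibrewise wedge I would assemble $1\pp\partial$ and $s$ into a fibrewise pointed map
\[
\psi\colon X\pp(C_f\vee\Sigma Z)\longrightarrow X\pp\Sigma Y,\qquad \psi|_{X\pp C_f}=1\pp\partial,\quad \psi|_{X\pp\Sigma Z}=s,
\]
and regard it as a morphism from the split cofibration sequence $X\pp C_f\to X\pp(C_f\vee\Sigma Z)\to X\pp\Sigma Z$ to the sequence above. The two requisite commutativities are exactly that $(1\pp\Sigma f)\circ(1\pp\partial)$ is fibrewise trivial (a cofibration-sequence relation) and that $(1\pp\Sigma f)\circ s\simeq\mathrm{id}$ (the section hypothesis), so $\psi$ induces the identity on the two outer terms. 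The fibrewise gluing lemma then forces $\psi$ to be a fibrewise homotopy equivalence, and transporting the wedge projection $X\pp(C_f\vee\Sigma Z)\to X\pp C_f$ through $\psi^{-1}$ yields a fibrewise retraction of $1\pp\partial$. Finally, applying $[-,X\pp C_f]_X$ to the coexact Puppe sequence gives exactness at $[X\pp C_f,X\pp C_f]_X$, so that $1\pp\partial$ admits a fibrewise retraction if and only if $1\pp q$ is fibrewise trivial; the retraction just produced therefore shows $1\pp q$ to be fibrewise trivial, as required.

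The step I expect to be the main obstacle is verifying that $\psi$ is a fibrewise homotopy equivalence, i.e. the fibrewise gluing (five) lemma for cofibration sequences: over a point this is routine, but fibrewise the basepoints vary over $X$ and one must ensure that a comparison of cofibration sequences which is an equivalence on two terms is an equivalence on the third in the fibrewise sense. I would discharge this by appealing to the fibrewise Whitehead theorem and the behaviour of fibrewise cofibrations in \cite[Part I]{Crabb-James}; alternatively, to bypass space-level equivalences entirely, one can re-derive both implications directly from the coexactness of the fibrewise Puppe sequence together with the fibrewise co-$H$ comultiplications on the suspensions $X\pp\Sigma Y$ and $X\pp\Sigma Z$. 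The remaining ingredients — coexactness, the splitting of the cofibre of a trivial map, and the wedge universal property — are the standard fibrewise analogues of ordinary pointed homotopy theory and transpose without trouble.
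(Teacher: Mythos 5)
The paper does not actually prove this lemma: it is stated as the fibrewise analogue of \cite[Proposition B.12]{CLOT} and left to the reader, so your write-up fleshes out exactly the intended argument rather than taking a different route. Both directions are organized correctly: the splitting of the cofibre of a fibrewise trivial map as a fibrewise wedge handles one implication, and the reduction of the triviality of $1\pp q$ to the existence of a fibrewise retraction of $1\pp\partial$ (via coexactness of the fibrewise Puppe sequence evaluated at $[X\pp C_f,X\pp C_f]_X$) handles the other; all the ingredients transpose to the fibrewise pointed category as developed in \cite{Crabb-James}.

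You have also correctly isolated the one genuinely delicate point, namely that $\psi$ is a fibrewise homotopy equivalence, but be aware that this is not merely a technical nuisance of varying basepoints: concluding that a comparison of cofibration sequences is an equivalence on the \emph{middle} term from equivalences on the two outer terms is not the gluing lemma (which concludes about the cofibre term). It requires the homology five lemma on each fibre followed by the Whitehead theorem and then Dold's theorem to upgrade to a fibrewise equivalence, and therefore needs the fibres $Y$ and $Z$ to be simply connected. Without some such hypothesis the statement is actually false, already over a point: take $Y=*$ and $Z$ a finite acyclic complex with nontrivial perfect fundamental group; then $\Sigma Z$ is contractible, so $1\pp\Sigma f$ trivially admits a section, while $q\simeq\mathrm{id}_Z$ is not nullhomotopic. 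The paper's statement omits this hypothesis, but it is satisfied where the lemma is applied (Lemma \ref{lem suspension lift}), since there the relevant fibres are $(p-1)$-connected with $p\ge 2$. Your proof is correct once that hypothesis is added; alternatively, your suggested reformulation via the co-$H$ structure on the suspensions would need the same connectivity input at some point, for the same reason.
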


We use this lemma as the inductive step in the following.

\begin{lemma}
\label{lem suspension lift}
Let $X$ be a $(p-1)$-connected $(np+r)$-dimensional space with $2r+1<p$. If $\sigma\TC(X)\le 2n$ then 
$\TC(X)\le 2n$.
\end{lemma}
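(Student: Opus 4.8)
The plan is to mimic the proof of Lemma~\ref{lem weak lift}, replacing the Whitehead--fat-wedge picture by the Ganea picture and the Blakers--Massey exactness of Theorem~\ref{thm exactness} by an inductive application of Lemma~\ref{lem sTC}. The statement $\sigma\TC(X)\le 2n$ means that some iterated fibrewise suspension of $1\pp p_{2n}\colon X\pp G_{2n}X\to X\pp X$ admits a section. I want to show that already the unsuspended map $1\pp p_{2n}$ admits a section (which by the Ganea characterization is exactly $\TC(X)=\TC^M(X)\le 2n$, using Proposition~\ref{prop max} to pass from $\TC^M$ to $\TC$). So the goal is to peel off the suspensions one at a time.

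The engine is Lemma~\ref{lem sTC}: the fibrewise suspension $1\pp\Sigma p_{2n}$ has a section iff the fibrewise projection onto the cofibre $1\pp q\colon X\pp X\to X\pp C_{p_{2n}}$ is fibrewise trivial. The key point is that this triviality condition is a statement about $[\,X\pp X, X\pp C_{p_{2n}}\,]_X$ which, under the connectivity/dimension hypotheses, is \emph{stable}: adding or removing a suspension does not change whether the relevant class vanishes. Concretely, first I would identify the fibre of $G_{2n}X\to X\times X$, namely the Ganea space $G_{2n}X$, and compute the connectivity of the cofibre $C_{p_{2n}}$; since the fat-wedge inclusion $i_{2n}\colon W^{2n}X\to\Pi^{2n}X$ is a $(2np)$-equivalence and $\Pi^{2n}X\to X\times X$ matches the Ganea fibration through the pullback squares of Section~\ref{prelim}, the cofibre $C_{p_{2n}}$ has fibres that are roughly $(2np-1)$-connected, while the base $X\pp X$ has fibres of dimension $np+r$. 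The hypothesis $2r+1<p$ guarantees that $np+r$ sits below twice the connectivity of the fibre of $C_{p_{2n}}$, i.e. in the range where the Freudenthal suspension homomorphism on $[\,X\pp X, X\pp C_{p_{2n}}\,]_X$ is already an isomorphism.

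Granting this, the argument runs by downward induction on the number of suspensions. Suppose $1\pp\Sigma^{i}p_{2n}$ admits a section with $i\ge 1$; by Lemma~\ref{lem sTC} the map $1\pp\Sigma^{i-1}q$ into the $(i-1)$-fold suspended cofibre is fibrewise trivial. Because we are in the stable range, triviality of $1\pp\Sigma^{i-1}q$ forces triviality of $1\pp\Sigma^{i-2}q$ (one applies Freudenthal fibrewise, via the obstruction-theoretic comparison of $[\,-,-\,]_X$ groups exactly as in the proof of Theorem~\ref{thm exactness}), and hence by Lemma~\ref{lem sTC} again $1\pp\Sigma^{i-1}p_{2n}$ admits a section. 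Iterating down to $i=0$ yields a section of $1\pp p_{2n}$ itself. Thus $\TC^M(X)\le 2n$, and $\TC(X)=\TC^M(X)$ by Proposition~\ref{prop max}, so $\TC(X)\le 2n$.

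The main obstacle I anticipate is making the ``stable range'' bookkeeping precise: one must verify that the dimension $np+r$ of the fibres of $Z=X\pp X$ really falls below the Freudenthal bound $2c+1$ coming from the connectivity $c$ of the cofibre's fibres, and that this is exactly what $2r+1<p$ encodes. I would compute $c$ carefully from the Ganea construction (or transport it from the fat-wedge side through the homotopy-pullback squares of Section~\ref{prelim}, where the relevant connectivity is visibly $2np-1$) and then check the inequality $np+r\le 2(2np-1)-$(something) reduces to $2r+1<p$, paralleling the Blakers--Massey computation in Lemma~\ref{lem weak lift}. A secondary technical point is ensuring Lemma~\ref{lem sTC} and the fibrewise Freudenthal theorem are applied to genuine fibrewise pointed \emph{bundles} so that the fibrewise obstruction theory of \cite{Crabb-James} is available; this mirrors the hypotheses already used in Theorem~\ref{thm exactness} and should cause no difficulty for finite complexes.
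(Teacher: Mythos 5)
Your overall strategy --- peeling off one suspension at a time, using Lemma~\ref{lem sTC} to translate ``$1\pp\Sigma^{i}p_{2n}$ has a section'' into ``the projection onto the $(i-1)$-fold suspended cofibre is fibrewise trivial,'' followed by a stable-range comparison to descend --- is the same induction the paper runs. But the way you have organized the descent leaves a genuine gap at the very last step. Your inductive move is: section of $1\pp\Sigma^{i}p_{2n}$ $\Rightarrow$ $1\pp\Sigma^{i-1}q$ trivial $\Rightarrow$ (fibrewise Freudenthal) $1\pp\Sigma^{i-2}q$ trivial $\Rightarrow$ (Lemma~\ref{lem sTC}) section of $1\pp\Sigma^{i-1}p_{2n}$. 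This requires $i\ge 2$ for $\Sigma^{i-2}q$ to exist; when you reach $i=1$ you are left with a section of $1\pp\Sigma p_{2n}$ and triviality of the unsuspended cofibre projection $X\pp X\to X\pp G_{[2n]}X$, and Lemma~\ref{lem sTC} can only ever convert such triviality into a section of a \emph{suspension} of a Ganea map. It cannot produce a section of $1\pp p_{2n}$ itself --- and that final conversion is precisely the content of the lemma: if ``$q$ trivial'' implied ``$p$ has a section'' without a dimension hypothesis, $\sigma\TC$ and $\TC^M$ would never differ.

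The missing ingredient is Theorem~\ref{thm exactness} applied to the fibrewise cofibration sequence $X\pp\Sigma^{j}G_{2n}X\to X\pp\Sigma^{j}X\to X\pp\Sigma^{j}G_{[2n]}X$. Since the fibres $\Sigma^{j}G_{2n}$ and $\Sigma^{j}G_{[2n]}$ are $(p+j-1)$- and $(2np+j-1)$-connected, the hypothesis $2r+1<p$ puts the fibres of $X\pp\Sigma^{j}X$ (of dimension $np+r+j$) in the range where $[X\pp\Sigma^{j}X,X\pp\Sigma^{j}G_{2n}X]_X\to[X\pp\Sigma^{j}X,X\pp\Sigma^{j}X]_X\to[X\pp\Sigma^{j}X,X\pp\Sigma^{j}G_{[2n]}X]_X$ is exact, so triviality of the cofibre projection lifts the identity through $1\pp\Sigma^{j}p_{2n}$, i.e.\ yields an honest section at level $j$. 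The paper uses this at every level $j=i-1,\dots,0$, alternating with Lemma~\ref{lem sTC} to decrease $j$; in particular at $j=0$ it closes the induction with a section of $1\pp p_{2n}$, whence $\TC^M(X)\le 2n$ and $\TC(X)\le\TC^M(X)\le 2n$ (no appeal to Proposition~\ref{prop max} is needed). Your Freudenthal desuspension of classes in $[X\pp\Sigma^{j}X,X\pp\Sigma^{j}G_{[2n]}X]_X$ is plausible in this range, but it becomes redundant once the exactness argument is in place, and by itself it cannot finish the proof.
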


\begin{proof}
By definition of $\sigma\TC(X)$ there exists an integer $i$ such that the map 
$$1\pp \Sigma^i p_{2n}\colon X\pp \Sigma^i G_{2n}(X)\to X\pp \Sigma^i X$$ 
admits a section, so by Lemma \ref{lem sTC} the map 
$$1\pp\Sigma^{i-1} q_{2n}\colon X\pp\Sigma^{i-1}X\to X\pp\Sigma^{i-1}G_{[2n]}$$ 
is fibrewise homotopy trivial. 
Since $\Sigma^{i-1}G_n$ is $(p+i-2)$-connected and $\Sigma^{i-1}G_{[2n]}$ is $(2np+i-2)$-connected,  
Theorem \ref{thm exactness} implies that the induced function
$$\xymatrix{[X\pp\Sigma^{i-1}X,X\pp\Sigma^{i-1}G_{2n}]_X \ar[rr]^{(1\pp \Sigma^{i-1}p_{2n})_*} && [X\pp\Sigma^{i-1}X,X\pp\Sigma^{i-1}X]_X}$$
is surjective whenever $2(np+r)+(i-1)\le (2n+1)p+2i-4$, and the preimage of the identity map on 
$X\pp\Sigma^{i-1}X$ is clearly a section of $1\pp \Sigma^{i-1}p_{2n}$. In particular, if $2(np+r)\le (2np-1)$ 
then we can inductively conclude that the maps
$1\pp \Sigma^{i-1}p_{2n},1\pp \Sigma^{i-2}p_{2n},\ldots,1\pp p_{2n}$ admit a section, so $\TC^M(X)\le 2n$ and $\TC(X)\le 2n$.
\end{proof}

We may now formulate a result that is analogous to Theorem \ref{thm TC=wTC}, and that summarizes 
the relations between the topological complexity and the stable topological complexity when
both are close to the maximal values given by the dimension estimate.

\begin{theorem}\label{thm TC=sTC}
Let $X$ be a $(p-1)$-connected $(np+r)$-dimensional space. Then each of the following conditions
implies that $\TC(X)=\sigma\TC(X)$:
\begin{itemize}
\item[(a)] $\sigma\TC(X)=2n+1$;
\item[(b)] $\sigma\TC(X)=2n$ and $2r+1<p$;
\item[(c)] $\sigma\TC(X)=2n-1$, $\sigma\cat(X)=n$ and $r+1<p$.
\end{itemize}
\end{theorem}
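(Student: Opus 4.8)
The plan is to follow the proof of Theorem~\ref{thm TC=wTC} almost verbatim, substituting the stable lower bound for the weak one. The two external inputs are the universal inequality $\sigma\TC(X)\le\TC(X)$ and the upper bound $\TC(X)\le 2n+1$ supplied by Theorem~\ref{max}. With these in hand, parts (a) and (b) are immediate. For (a), the chain $2n+1=\sigma\TC(X)\le\TC(X)\le 2n+1$ forces $\TC(X)=\sigma\TC(X)=2n+1$. For (b), the hypothesis $2r+1<p$ is exactly the one required by Lemma~\ref{lem suspension lift}, which upgrades $\sigma\TC(X)\le 2n$ to $\TC(X)\le 2n$; together with $2n=\sigma\TC(X)\le\TC(X)$ this gives $\TC(X)=2n=\sigma\TC(X)$.

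All the work is in part (c). The target is to deduce $\cat(X)=n$ from $\sigma\cat(X)=n$ and $r+1<p$, for then Farber's inequality $\TC(X)\le 2\cat(X)-1=2n-1$ combined with $2n-1=\sigma\TC(X)\le\TC(X)$ yields $\TC(X)=2n-1=\sigma\TC(X)$. Since $\sigma\cat(X)\le\cat(X)$ is automatic, it suffices to prove $\cat(X)\le n$, i.e. a category analogue of Lemma~\ref{lem suspension lift}: \emph{if $X$ is $(p-1)$-connected and $(np+r)$-dimensional with $r+1<p$, then $\sigma\cat(X)\le n$ implies $\cat(X)\le n$.} I would prove this by repeating the desuspension argument of Lemma~\ref{lem suspension lift}, but applied to the ordinary (non-fibrewise) Ganea fibration $p_{n-1}\colon G_{n-1}X\to X$ whose section detects $\cat(X)\le n$ in the non-normalized convention. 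Concretely, $\sigma\cat(X)\le n$ means some suspension $\Sigma^{i}p_{n-1}$ has a section; by the classical statement preceding Lemma~\ref{lem sTC} this is equivalent to the map $\Sigma^{i-1}X\to\Sigma^{i-1}G_{[n-1]}X$ to the cofibre being nullhomotopic; and the non-fibrewise case of the Blakers--Massey exactness of Theorem~\ref{thm exactness} (the base being a point, so $m=0$) then lets one peel off the suspensions one at a time down to a genuine section of $p_{n-1}$.

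The numerology is what pins down the hypothesis $r+1<p$, and checking it is the main obstacle. Here $G_{n-1}X$ is $(p-1)$-connected and its cofibre $G_{[n-1]}X$ is $(np-1)$-connected, so at the $j$-th desuspension step the exactness of Theorem~\ref{thm exactness} over a point requires $\dim\Sigma^{j-1}X=np+r+j-1\le (p+j-2)+(np+j-2)$, i.e. $r+3\le p+j$; the binding case is the final step $j=1$, which reads precisely $r+1<p$. The condition is $r+1<p$ rather than the $2r+1<p$ of the $\TC$ case because we now work over a point, so the correction term $-m$ with $m=\dim X$ is absent and a single Ganea fibration replaces its square. Two points need care: the paper's convention is the \emph{non-normalized} category, so $\cat(X)\le n$ corresponds to a section of $p_{n-1}$ with fibre the $n$-fold join of $\Omega X$; and the extra desuspension built into Lemma~\ref{lem sTC} must be tracked when matching dimensions against connectivities. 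Alternatively, one may short-circuit this step by invoking a maximal-category approximation theorem for $\sigma\cat$ in the spirit of \cite[Theorem 2.2]{Str00}, the result used in the weak case.
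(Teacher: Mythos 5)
Your proposal is correct and follows the same skeleton as the paper's proof: parts (a) and (b) are handled exactly as the paper does (via $\sigma\TC(X)\le\TC(X)$, Theorem \ref{max} and Lemma \ref{lem suspension lift}), and part (c) is reduced to establishing $\cat(X)=n$ and then applying Farber's inequality $\TC(X)\le 2\cat(X)-1$. The one place you genuinely diverge is the justification of the implication $\sigma\cat(X)=n\Rightarrow\cat(X)=n$: the paper disposes of it in one line by citing \cite[Proposition 2.56]{CLOT}, whereas you re-derive it by running the desuspension argument of Lemma \ref{lem suspension lift} for the ordinary Ganea fibration over a point. Your numerology checks out: with $\Sigma^{j-1}G_{n-1}X$ being $(p+j-2)$-connected, its cofibre $\Sigma^{j-1}G_{[n-1]}X$ being $(np+j-2)$-connected (since the fibre of $p_{n-1}$ is the $(np-2)$-connected join $\ast^n\Omega X$), and $\dim\Sigma^{j-1}X=np+r+j-1$, the exactness condition of Theorem \ref{thm exactness} with $m=0$ is binding at the final step $j=1$ and gives exactly $r+1<p$, as you say. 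So your route is more self-contained -- it uses only machinery already developed in the paper -- at the cost of reproving a known fact; the alternative you mention at the end, quoting a maximal-category approximation theorem, is precisely what the paper actually does.
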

\begin{proof}
Only the last case requires some comment. Clearly $\TC(X)\ge 2n-1$. If on the other hand $\sigma\cat(X)=n$, 
then by \cite[Proposition 2.56]{CLOT} $\cat(X)=n$, so $\TC(X)\le 2n-1$.
\end{proof}

\end{document}